\documentclass{amsart}
\usepackage{amssymb} 
\usepackage{graphicx} 
\usepackage{amsmath}
\usepackage{amsfonts}
\usepackage{hyperref}
\usepackage{young}
\usepackage[vcentermath]{youngtab}
\usepackage{cite}
\usepackage{mathtools}
\DeclarePairedDelimiter{\ceil}{\lceil}{\rceil}

\usepackage{xcolor}

\theoremstyle{definition}
\newtheorem{definition}{Definition}
\newtheorem{example}{Example}
\newtheorem{remark}{Remark}
\theoremstyle{plain}

\newtheorem{theorem}{Theorem}
\newtheorem{lemma}{Lemma}
\newtheorem{corollary}{Corollary}
\newtheorem{conjecture}{Conjecture}

\begin{document}

	 \title{Degree Growth of Rational Maps with Laurent Property}
	 \author[M.E.~Can]{Melih Emin Can}
	  \address{\normalfont Faculty of Electrical Engineering and
Computing, Department of Applied
Mathematics, University of Zagreb,
Zagreb, Croatia }
	 \email{mcan@fer.hr}

	 \maketitle
\begin{abstract}
Algebraic entropy measures the exponential growth of the degrees of iterates of rational self-maps. It is introduced by Bellon and Viallet in \cite{BellonandViallet}. We study degree growth for rational maps arising from recurrences with the Laurent property, with particular emphasis on Gale--Robinson recurrences. We develop a direct method for determining the degree growth from the recurrence relation itself, reducing the problem to the study of associated max-linear, or tropical recurrences.

We first obtain a closed formula for the degrees of iterates for a broad class of homogeneous quadratic Laurent recurrences. We then apply this result to the two-term Gale--Robinson family and derive an explicit quadratic growth formula with a periodic correction term. For three-term Gale--Robinson recurrences, we express the algebraic entropy in terms of the associated tropical growth rate sequence. Based on explicit computations, we formulate a conjecture for the general three-term family predicting quadratic growth with a periodic correction. We prove the conjecture for the infinite family with parameters $p=1$, $q=2$, and $r\geq3$. 
\end{abstract}

\section{Introduction}

Let $\mathbb{P}^m$ denote the $m$-dimensional projective space. We write its elements as
$
(x_0:\dots:x_m)
$. Consider a dominant rational map

$$
f\colon\mathbb{P}^m\rightarrow\mathbb{P}^m.
$$
The map $f$ can be represented by homogeneous polynomials 
$p_0,\ldots,p_m$ of the same degree and with no common factor:
$$
(x_0:\dots:x_m)
\mapsto
(p_0(x_0,\ldots,x_m):\dots:p_m(x_0,\ldots,x_m)).
$$
The common degree of these polynomials is called the degree of $f$. For $n\geq1$, we write
$$
d_n=\deg(f^n).
$$
Every rational map from $\mathbb{A}^m$ to itself can be regarded as a rational map from $\mathbb{P}^m$ to $\mathbb{P}^m$ by projectivization (see \cite{Propp}), and hence the notion of degree extends to this case.

The algebraic entropy of $f$, introduced in \cite{BellonandViallet}, is defined by
$$
\mathcal{E}(f)
=
\lim_{n\to\infty}\frac{1}{n}\log d_n.
$$
The existence of this limit follows from the subadditivity of $\log d_n$ and Fekete's lemma.

In this paper, we study rational maps arising from recurrences with the Laurent property. The Laurent phenomenon was introduced by Fomin and Zelevinsky \cite{FandZ}. The Laurent phenomenon occurs in many nonlinear recurrences and provides a useful connection between dynamics of rational maps, cluster algebras, and discrete integrable systems (see \cite{AlmanCuencaHuang, FandZ, hone2006diophantine, hone2007singularity, hone2008integrality, FordyHone2012, Fordy2014, Hone2025}).

The ring of Laurent polynomial in the variables $x_1,\ldots,x_s$ with coefficients in a field $\mathbb{F}$, that is $$R=\mathbb{F}[x_1^{\pm},\ldots,x_s^{\pm}]$$ consists of expressions of the form $$\sum_{k\in \mathbb{Z}}c_kx^k,$$ where $x$ is a variable and only finitely many coefficients $c_k$ are nonzero.
Consider a rational map
$$
f\colon\mathbb{A}^m\rightarrow\mathbb{A}^m
$$
of the form
$$
f(x_0,\ldots,x_{m-1})
=
(x_1,\ldots,x_m),
$$%
where $x_m$ is a rational function of $x_0,\ldots,x_{m-1}$. For $n\geq0 $ the $n^\text{th}$ iterate of the map $f$ is
$$
f^n(x_0,\ldots,x_{m-1})
=
(x_n,\ldots,x_{n+m-1}),
$$
where $\{x_n\}_{n\geq0}$ is a sequence of rational functions. We say that $f$ has the Laurent property if $x_n$ is a Laurent polynomial of the variables $x_0,\ldots,x_{m-1}$ for all $n\geq 0$. Similarly for $m^\text{th}$-order recurrence
\[
    x_{n+m}=f(x_n,\ldots,x_{n+m-1})
\]
with the initial variables $x_0,\ldots,x_{m-1}$, the recurrence has the Laurent property if $x_n$ can be expressed as a Laurent polynomial in the initial variables $x_0,\ldots,x_{m-1}$ for $n\geq 0$. Sometimes we call it a Laurent recurrence. 

Important examples are given by the Somos and Gale--Robinson recurrences. The Laurent property of generalized Gale--Robinson sequences was established by Fomin and Zelevinsky \cite{FandZ}. %
%
A systematic study of recurrences with the Laurent property was later given by Alman, Cuenca, and Huang \cite{AlmanCuencaHuang}.

The Laurent property is particularly useful for degree growth. If an iterate is written in lowest terms as
$$
x_n=\frac{G(x_0,\ldots,x_{m-1})}
{x_0^{d_n^{(0)}}\cdots x_{m-1}^{d_n^{(m-1)}}},
$$
where $G(x_0,\ldots,x_{m-1})$ is a polynomial in the initial variables, and the numerator and denominator have no common monomial factor, then the denominator exponents can often be determined by a tropical, or max-plus recurrence. This approach was developed for cluster maps by Fordy and Hone \cite{FordyHone2012, Fordy2014}. In particular, the tropical versions of the Somos-$4$ and Somos-$5$ recurrences give the denominator degree sequences of the corresponding iterates. For example, the tropical Somos-$4$ recurrence is
$$
d_{n+4}+d_n
=
\max\{d_{n+3}+d_{n+1},2d_{n+2}\}.
$$
Its degree sequence has quadratic growth, and hence the associated algebraic entropy is zero (see \cite{FordyHone2012,Fordy2014}). More generally, the tropical viewpoint has become a standard tool for studying degree growth in Laurent systems.

The connection between Laurent recurrences and algebraic entropy has also been investigated for reductions of discrete integrable equations. Two-term Gale--Robinson recurrences are one-dimensional reductions of the Hirota--Miwa equation, and three-term ones are reductions of the discrete BKP equation. Mase proved that one-dimensional reductions of the Hirota--Miwa and discrete BKP equations with the Laurent property have zero algebraic entropy \cite{Mase2015}. Hone, Kouloukas, and Ward studied reductions of the Hirota--Miwa equation and showed that these reductions include broad families of Somos and Gale--Robinson recurrences \cite{HoneKouloukasWard2017}. Galashin and Pylyavskyy deduced quadratic degree growth, and hence zero algebraic entropy, for two-term and three-term Gale--Robinson recurrences \cite[Proposition~11.4]{GalashinPylyavskyy} from the combinatorial formulas of Speyer for the octahedron recurrence \cite{Speyer} and of Carroll and Speyer for the cube recurrence \cite{CarrollSpeyer}. These results establish quadratic growth, but, to the best of our knowledge, they do not give explicit closed formulas for the degree sequences.


The aim of this paper is to study the degree growth of a class of recurrences with the Laurent property directly from their recurrence relations. 

We first consider recurrences of the form
\begin{equation}\label{eqn:form-of-rec}
x_{n+m}x_n
=P(x_{n+1},\ldots,x_{n+m-1})=
\sum_{s=1}^{m-1}\sum_{t=1}^{m-1}
c_{s,t}x_{n+s}x_{n+t},
\quad n\geq0,
\end{equation}
where
$$
P\in\mathbb{Z}[x_0,\ldots,x_{m-1}]
$$
is homogeneous of degree two and has non-negative coefficients. Moreover, we assume that $P$ is not divisible by an initial variable $x_i$ for $i=0,\ldots,m-1$.

Our first result gives an explicit formula for the degree of the iterates of the associated rational map.

\begin{theorem}\label{general}
Let $\{x_n\}_{n\geq0}$ be a sequence of rational functions in
$\mathbb{Q}(x_0,\ldots,x_{m-1})$ satisfying \eqref{eqn:form-of-rec}. Suppose that $\{x_n\}_{n\geq0}$ has the Laurent property and that there exists $1\leq t\leq m-1$ such that
$c_{t,m-t}\neq 0$.
Consider the rational map
$$
f\colon\mathbb{A}^m\longrightarrow\mathbb{A}^m,
\qquad
f(x_0,\ldots,x_{m-1})
=
(x_1,\ldots,x_m).
$$
Then
$$
\deg(f^n)
=
1+\left(
\sum_{i=0}^{m-1}
\max\{k_j^{(i)}\mid n\leq j\leq n+m-1\}
\right)
$$
for $0\leq i\leq m-1$ the sequence $\{k_n^{(i)}\}_{n \geq 0}$ satisfies the initial conditions  
$$
k_0^{(i)}=\cdots=k_{m-1+i}^{(i)}=0,
\qquad
k_{m+i}^{(i)}=1,
$$
and of the form
\[
k^{(i)}_{n+m} = \max_{(s,t)\in C}\big(k^{(i)}_{n+s}+k^{(i)}_{n+t}\big) - k^{(i)}_{n}, \quad n\ge i+1,
\]
where $C = \{(s,t) : 1\le s,t\le m-1,\ c_{s,t}\neq 0\}$.
\end{theorem}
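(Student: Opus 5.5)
Write each iterate in lowest form as $x_n = N_n/\prod_{i=0}^{m-1} x_i^{k_n^{(i)}}$, where $N_n$ is a polynomial not divisible by any $x_i$. The plan has three steps. First, show that the exponents $k_n^{(i)}$ satisfy the stated tropical recurrence. Second, show that $N_n$ is homogeneous of a degree that is forced by homogeneity. Third, compute the degree of the projectivized map $f^n$ by clearing denominators in $(x_n,\ldots,x_{n+m-1})$.

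\textbf{Step 1: the tropical recurrence.} Since $P$ is homogeneous quadratic, induction shows that $x_n$ is homogeneous of degree $1$ as a rational function. Hence $\deg N_n = 1+\sum_i k_n^{(i)}$.

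For the exponents, fix $i$ and substitute $x_i=\varepsilon$, keeping the other variables generic. The order of vanishing of $x_n$ in $x_i$ is $-k_n^{(i)}$. For $n<i$ and $n>i$ within the initial window, $x_n$ is a different variable, so its order is $0$. For $n=i$ the order is $1$.

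Next, $x_{m+i}=P(x_{i+1},\ldots,x_{i+m-1})/x_i$. By hypothesis $P$ is not divisible by any $x_j$, and its arguments are initial variables (or, if $m+i$ exceeds the window, Laurent polynomials). This gives $k_{m+i}^{(i)}=1$. For $m\le n<m+i$, the term $x_n$ does not involve $x_i$ in its denominator; this gives the initial zeros.

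For $n\ge i+1$, write $x_{n+m}=P(\ldots)/x_n$. Because $P$ has nonnegative coefficients, there is no cancellation among leading Laurent terms in $x_i$. So the pole order of $P(x_{n+1},\ldots,x_{n+m-1})$ in $x_i$ is exactly $\max_{(s,t)\in C}(k_{n+s}^{(i)}+k_{n+t}^{(i)})$. To justify this, I would show inductively that the lowest $x_i$-coefficient of each $N_n$ is a polynomial with positive coefficients in the other variables; nonnegativity then propagates.

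Dividing by $x_n$ subtracts $k_n^{(i)}$. I must also ensure that no extra power of $x_i$ survives in the numerator, i.e.\ that the minimum exponent in the numerator is exactly attained. This is where the Laurent property (cancellation of the $x_n$ division) is used, combined with a coprimality statement: $N_n$ is not divisible by $x_i$.

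\textbf{Step 2: coprimality.} This is the main obstacle. I would argue it via the standard Fomin--Zelevinsky style: consecutive iterates $x_n$, $x_{n+m}$ are coprime as Laurent polynomials. The condition $c_{t,m-t}\ne0$ ensures that $P$ contains the monomial $x_{n+t}x_{n+m-t}$. Because of this, the tropical max is realized and the numerators are not divisible by monomials.

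\textbf{Step 3: the degree of $f^n$.} Homogenize and multiply the tuple $(x_n,\ldots,x_{n+m-1})$, together with the homogenizing coordinate, by the common denominator $\prod_i x_i^{M_i}$, where $M_i=\max_{n\le j\le n+m-1}k_j^{(i)}$. The resulting components are $N_j\prod_i x_i^{M_i-k_j^{(i)}}$, plus the term coming from the homogenizing coordinate $x_0^{\mathrm{hom}}$. Each of these has degree $1+\sum_i M_i$, because $\deg N_j=1+\sum_i k_j^{(i)}$.

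It remains to check that these components have no common factor. A common monomial factor $x_i$ is excluded because some $j$ attains $M_i$ and $x_i\nmid N_j$. A common nonmonomial factor is excluded by pairwise coprimality of the $N_j$, the Laurent coprimality from Step 2, together with the homogenizing component, which is a pure monomial.

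This yields $\deg(f^n)=1+\sum_i M_i$, as claimed.
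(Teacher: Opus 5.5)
\textbf{There is a genuine gap: you never prove that $-v_{x_i}(x_n)\ge 0$ for $n\neq i$.} This is exactly what the hypothesis $c_{t,m-t}\neq 0$ is needed for.

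Your opening normalization assumes it silently. The Laurent property only lets you write $x_n=N_n\prod_i x_i^{-k_n^{(i)}}$ with $x_i\nmid N_n$ and $k_n^{(i)}\in\mathbb{Z}$. Your own value $k_i^{(i)}=-1$ shows the exponents are not a priori nonnegative.

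Step 3 depends on it. The last component $y\prod_i x_i^{M_i}$ is a polynomial only if every $M_i\ge 0$; otherwise the degree would be $1+\sum_i\max(M_i,0)$. The paper's example $k_{n+6}+k_n=\max(2k_{n+1},2k_{n+2})$ shows that the tropical sequence really can become negative when no pair $(t,m-t)$ lies in $C$.

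The missing step is short. Use the theorem's convention $k_i^{(i)}=0$. Since $(t,m-t)\in C$, you have $k_{n+m}+k_n\ge k_{n+t}+k_{n+m-t}$, that is, $k_{n+m}-k_{n+m-t}\ge k_{n+t}-k_n$ and $k_{n+m}-k_{n+t}\ge k_{n+m-t}-k_n$. Strong induction from the initial window then gives $k_N\ge k_{N-t}$ and $k_N\ge k_{N-(m-t)}$ for all $N$, hence $k_N\ge k_{N \bmod t}=0$.

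Your Step 2 instead spends the hypothesis on a Fomin--Zelevinsky-type coprimality of $x_n$ and $x_{n+m}$. You do not derive it (``the tropical max is realized'' is not an argument), and it is not needed:
\begin{itemize}
\item $v_{x_i}$ is additive, so $v_{x_i}(x_{n+m})=v_{x_i}(P(\ldots))-v_{x_i}(x_n)$ holds without any coprimality input.
\item The last projective component is a monomial, so every common factor of the components is a monomial. Only $x_i$ and $y$ need to be excluded, and pairwise coprimality of the $N_j$ plays no role.
\end{itemize}

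\textbf{Two smaller points in Step 1 also need repair.}

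First, your proposed invariant, that the lowest $x_i$-coefficient of $N_n$ has positive coefficients, does not propagate through the division by $x_n$; compare $(a^3+b^3)/(a+b)=a^2-ab+b^2$. What does propagate is positivity of $\mu_i(x_n)$, the coefficient of the lexicographically minimal monomial of the $x_i$-adic image. Since $\mu$ is multiplicative, $\mu_i(x_{n+m})=\mu_i(S)/\mu_i(x_n)>0$. This alone rules out cancellation among the minimal-valuation terms of $S=P(x_{n+1},\ldots,x_{n+m-1})$.

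Second, for $m\le n<m+i$ it is not enough that $x_i$ is absent from the denominator. You need $v_{x_i}(x_n)=0$ exactly. At each of these steps this uses that $P$ is not divisible by the variable in the slot occupied by $x_i$, so that some $(s,t)\in C$ avoids that slot. You invoke this only at $n=m+i$, but it is needed at each of these steps.
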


The theorem reduces the computation of the degree growth of $f$ to the study of max-linear sequences. This is related to the tropical approach to denominator vectors in cluster dynamics \cite{FordyHone2012,Fordy2014}. Our formulation starts directly from the recurrence \eqref{eqn:form-of-rec} and does not require an a priori description in terms of a cluster seed.

We next consider the particular max-linear recurrence
\begin{equation}\label{2orderrec p=1 q=2}
k_{n+m}
=
\max
\left\{
k_{n+m-1}+k_{n+1}-k_n,
k_{n+m-2}+k_{n+2}-k_n
\right\},
\qquad n\geq1,
\end{equation}
with

$$
k_1=\cdots=k_{m-1}=0,
\qquad
k_m=1.
$$

This is the tropical recurrence associated with the two-term Gale--Robinson recurrence

$$
x_{n+m}x_n
=
x_{n+1}x_{n+m-1}
+
x_{n+2}x_{n+m-2}.
$$

Two-term Gale--Robinson recurrences are classic examples of Laurent recurrences and arise naturally from period-one cluster mutations \cite{FandZ,FordyMarsh2011}.

For Somos-$k$ with $k=4,5,6,7$, Bykovskii, Romanov, and Ustinov \cite{BykovskiiRomanovUstinov2021} explicitly calculated the associated tropical sequences, obtaining quadratic formulas with periodic correction terms of periods $8$, $14$, $20$ and $30$, respectively, by case-by-case computation; see also Nakata \cite{Nakata} for Somos-4 and Somos-5 with arbitrary initial values. They also remarked that there is reason to expect their result to extend to general Gale--Robinson sequences. Theorems~\ref{2order p=1 q=2 Thm} and~\ref{thm on 3order growth rate of GR} independently establish this for two infinite families and recover their results for $m=4,5$ and for $r=3,4$, respectively.

\begin{theorem}\label{2order p=1 q=2 Thm}
Let $m\geq4$ be an integer. Let $\{k_n\}_{n\geq1}$ be the sequence defined by
$$
k_1=\cdots=k_{m-1}=0,
\qquad
k_m=1,
$$
and satisfying \eqref{2orderrec p=1 q=2}. Then
$$
k_n=\frac{n^2+c_n}{2N},
$$
where $\{c_n\}_{n\geq1}$ is a periodic sequence with period
$$
N=(m-2)(m-1)+2.
$$
\end{theorem}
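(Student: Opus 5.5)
The plan is to reduce the statement to a single shift identity,
$$
k_{n+N}=k_n+n+\tfrac{N}{2}\qquad(n\geq1),
$$
and then read off the closed form. Note that $N=(m-2)(m-1)+2$ is even, so $N/2$ is an integer. Given the identity, put $c_n=2Nk_n-n^2$. Then
$$
c_{n+N}=2Nk_n+2Nn+N^2-(n+N)^2=c_n,
$$
so $\{c_n\}$ is periodic with period $N$ and $k_n=(n^2+c_n)/(2N)$, which is the claim.

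To prove the shift identity I would use affine invariance of \eqref{2orderrec p=1 q=2}. Suppose $\{k_n\}$ satisfies \eqref{2orderrec p=1 q=2} and let $k'_n=k_n+an+b$. In both branches of the maximum, the added terms are
$$
a\bigl((n+m-j)+(n+j)-n\bigr)+(b+b-b)=a(n+m)+b,\qquad j=1,2,
$$
which is exactly what $k'_{n+m}$ acquires. So $\{k'_n\}$ satisfies the same recurrence. Both the shifted sequence $\ell_n=k_{n+N}$ and the sequence $k_n+n+N/2$ therefore satisfy \eqref{2orderrec p=1 q=2} for $n\geq1$. A solution is determined by $m$ consecutive values, so it suffices to check that the two sequences agree for $n=1,\dots,m$. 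Concretely, one needs
$$
k_{N+j}=j+\tfrac N2\quad(1\leq j\leq m-1),\qquad k_{N+m}=m+1+\tfrac N2 .
$$

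The main work, and the expected obstacle, is computing $k_n$ explicitly for $1\leq n\leq N+m$. I would pass to the first differences $a_n=k_{n+1}-k_n$. The recurrence then says that $a_{n+m-1}$ equals the larger of two window sums of the $a$'s. The first branch gives $\sum_{i=n}^{n}a_i$, i.e. $a_n$. The second gives $\sum_{i=n}^{n+1}a_i-\sum_{i=n+m-2}^{n+m-2}a_i$ plus lower-order terms; I would work out this bookkeeping carefully.

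I expect the difference sequence to have a block structure: blocks of length about $m-1$ or $m-2$ on which $a_n$ is constant, with the constant increasing by one from block to block. After roughly $m-2$ such blocks there should be a short transition, accounting for the extra $+2$ in $N$. I would first compute small cases ($m=4,5,6$) to guess the exact pattern, which should reproduce the known periods $8$ and $14$. I would then state it as an explicit piecewise formula for $a_n$ (equivalently $k_n$) on $1\leq n\leq N+m$. This formula I would prove by induction on $n$, determining in each range which branch of the maximum is active, by comparing the two candidate values using the inductive formula.

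Once this formula is established, summing the $a_n$ gives the required values of $k_{N+1},\dots,k_{N+m}$. The affine-invariance argument above then propagates them to all $n$.
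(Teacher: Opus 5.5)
Your reduction matches the paper's proof step for step, and it is correct. This covers the shift identity $k_{N+n}=k_n+n+N/2$, the invariance of \eqref{2orderrec p=1 q=2} under adding an arithmetic progression, the reduction to the $m$ values $k_{N+j}=N/2+j$ $(1\le j\le m-1)$ and $k_{N+m}=N/2+m+1$, and the periodicity of $c_n=2Nk_n-n^2$.

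The gap is that everything with real content is left as ``guess from small cases, then induct'': an explicit formula for $k_n$ on $1\le n\le N+m$, the check of which branch of the maximum is active, and where the pattern breaks. Moreover, the shape you predict is wrong, so it cannot serve as the inductive hypothesis. The differences $a_n=k_{n+1}-k_n$ are not constant on blocks with the constant rising by one. That shape would make $k_N$ of order $m^3$, whereas $k_N=N/2-1=\binom{m-1}{2}$. Write $n=(i-1)(m-1)+j$ with $1\le j\le m-1$. The paper proves $k_n=\binom{i}{2}+\max(j-m+i,0)$ for $1\le n\le N-2$. Equivalently, the $i$-th block of $(a_n)$ is the $0/1$ word $0^{m-1-i}1^{i}$ for $1\le i\le m-2$, followed by $a_{N-1}=0$ and $a_N=2$. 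What grows by one per block is the number of ones; the quantity that is roughly $i$ on block $i$ is $k_{n+m-1}-k_n$.

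With this pattern your difference route goes through, and it is arguably tidier than the paper's five increment identities. The recurrence is exactly $a_{n+m-1}=a_n+\max(0,\,a_{n+1}-a_{n+m-2})$, with no lower-order terms. So on $0/1$ data the correction equals $1$ precisely when $a_{n+1}=1$ and $a_{n+m-2}=0$. For $n$ in block $i\le m-3$ this happens only at offset $j=m-1-i$, which is exactly how $0^{m-1-i}1^{i}$ turns into $0^{m-2-i}1^{i+1}$. The pattern breaks at $n=N-m$ (block $m-2$, offset $j=1=m-1-i$), where $a_{n+m-2}=a_{N-2}=1$ instead of $0$. This is precisely the exceptional case $i+j=m-1$, $j=1$ excluded in the paper's induction. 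The break gives $a_{N-1}=0$ and $a_N=2$, hence $k_{N-1}=k_N=N/2-1$. A few more steps give $a_{N+j}=1$ for $1\le j\le m-2$, $a_{N+m-1}=2$, and $\sum_{n=1}^{N}a_n=N/2+1$. These are exactly the initial values your invariance argument needs. The missing part of your proof is to state and prove this, or the paper's equivalent formula for $k_n$ together with the direct computation of $k_{N-1},\dots,k_{N+m}$.
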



As another application of Theorem \ref{general}, we consider three-term Gale--Robinson recurrences which are of the form

\begin{equation}\label{eqn:three-term-GR}
    x_{n+m}x_n
=
x_{n+p}x_{n+m-p}
+
x_{n+q}x_{n+m-q}
+
x_{n+r}x_{n+m-r}
\end{equation}
for $m=p+q+r$.

%
For the associated rational maps, Corollary~\ref{aeofG-R} gives an expression for the algebraic entropy in terms of the corresponding tropical Gale--Robinson sequence by applying Theorem~\ref{general} to \eqref{eqn:three-term-GR}. 
\begin{corollary}\label{aeofG-R}
Let
$$
f\colon\mathbb{A}^m\longrightarrow\mathbb{A}^m
$$
be the rational map
$$
f(x_0,\ldots,x_{m-1})
=
(x_1,\ldots,x_m),
$$
where $\{x_n\}_{n\geq0}$ is a three-term Gale--Robinson sequence. Then the algebraic entropy of $f$ is 
$$
\mathcal{E}(f)
=
\lim_{n\to\infty}
\frac{1}{n}\log\left(\sum_{i=0}^{m-1}\sum_{j=n}^{n+m-1}k_j^{(i)} \right),
$$
for $0\leq i \leq m-1$ the sequence $\{k_n^{(i)}\}_{n\geq0}$ satisfies
$$
k^{(i)}_{n+m}+k^{(i)}_n
=
\max
\left\{
k^{(i)}_{n+p}+k^{(i)}_{n+m-p},
k^{(i)}_{n+q}+k^{(i)}_{n+m-q},
k^{(i)}_{n+r}+k^{(i)}_{n+m-r}
\right\},
\quad n\geq i+1,
$$
with
$$
k_0^{(i)}=\cdots=k^{(i)}_{m-1+i}=0,
\qquad
k^{(i)}_{m+i}=1.
$$
\end{corollary}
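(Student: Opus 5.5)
We derive the corollary from Theorem~\ref{general}, applied to the three-term recurrence \eqref{eqn:three-term-GR}, and then pass from maxima to sums inside the logarithm.

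\textbf{Step 1: the hypotheses of Theorem~\ref{general} hold.} The right-hand side of \eqref{eqn:three-term-GR} is
\[
P=x_{p}x_{m-p}+x_{q}x_{m-q}+x_{r}x_{m-r}.
\]
This is a homogeneous quadratic with non-negative integer coefficients. It is not divisible by any single variable $x_i$, since its three monomials share no common variable when $p,q,r$ are distinct (the standard assumption on Gale--Robinson parameters). The Laurent property of three-term Gale--Robinson sequences is due to Fomin and Zelevinsky \cite{FandZ}. Finally, $c_{p,m-p}\neq 0$ with $1\le p\le m-1$, so the required index $t=p$ exists.

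\textbf{Step 2: read off the tropical recurrence.} We may take the support as
\[
C=\{(p,m-p),(q,m-q),(r,m-r)\},
\]
possibly symmetrized, which does not change the maximum. Then the max-linear recurrence of Theorem~\ref{general} becomes exactly the recurrence stated in the corollary, with the same initial conditions. Theorem~\ref{general} therefore gives
\[
d_n=\deg(f^n)=1+\sum_{i=0}^{m-1}M_n^{(i)},\qquad M_n^{(i)}=\max_{n\le j\le n+m-1}k_j^{(i)}.
\]

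\textbf{Step 3: compare $d_n$ with the sum $S_n=\sum_{i}\sum_{j=n}^{n+m-1}k_j^{(i)}$.} First I would show that every $k_j^{(i)}\ge 0$. Heuristically this holds because $k_j^{(i)}$ is the exponent of $x_i$ in the denominator of $x_j$ written in lowest terms, as identified in the proof of Theorem~\ref{general}; alternatively one can argue directly by induction from the max recurrence. Given nonnegativity, a maximum of $m$ nonnegative terms is at most their sum, and the sum is at most $m$ times the maximum. Hence
\[
\frac{1}{m}S_n\le \sum_i M_n^{(i)}\le S_n,
\]
and so
\[
\frac{1}{m}S_n\le d_n\le 1+S_n.
\]
We also need $S_n\ge 1$ for large $n$. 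This follows because, for instance, $d_n\ge 2$ for $n\geq1$: the map $f$ is not linear, and the denominator of $x_m$ is $x_0$, giving $k_m^{(0)}=1$. Taking $\frac1n\log$ of these inequalities, the constants $\log m$ and the additive $1$ contribute $O(1/n)$. Since the limit defining $\mathcal{E}(f)$ exists by Fekete's lemma, the limit of $\frac1n\log S_n$ exists and equals $\mathcal{E}(f)$.

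\textbf{Main obstacle.} The delicate point is the nonnegativity of $k_j^{(i)}$. Subtractive max-plus recurrences can in principle produce negative values, and a negative entry would break both the max-versus-sum comparison and $S_n\geq1$. I would first try to extract nonnegativity from the denominator-vector interpretation built into Theorem~\ref{general}. Failing that, I would prove by induction that each sequence $k^{(i)}$ is nondecreasing from index $m+i$ onward, using the max over the three terms.
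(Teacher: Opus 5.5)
Your argument is correct in outline and follows the paper's route. The paper obtains this corollary simply by applying Theorem~\ref{general} to \eqref{eqn:three-term-GR}, replacing the window maxima $M_n^{(i)}$ by window sums without comment. Your Step~3 supplies exactly the comparison $\frac1m S_n\le d_n-1\le S_n$ that justifies this replacement. Two points need tightening, though.

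\textbf{Nonnegativity.} This is not merely heuristic. The proof of Theorem~\ref{general}, using the pair $(t,m-t)\in C$ (here $t=p$), shows $k^{(i)}_N\ge k^{(i)}_{N-t}$ and $k^{(i)}_N\ge k^{(i)}_{N-(m-t)}$, and hence $k^{(i)}_N\ge 0$. So rely on that.

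Your fallback, by contrast, is false. If $p\ge 2$, then at $n=i+1$ every index $i+1+s$ and $i+1+m-s$ with $s\in\{p,q,r\}$ lies in $[i+2,\,m+i-1]$. So $k^{(i)}_{m+i+1}=0<1=k^{(i)}_{m+i}$. Monotonicity in steps of $1$ holds only when $p=1$ (Corollary~\ref{cor:p=1-gives-max}).

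\textbf{The bound $S_n\ge 1$ for all $n\ge 1$.} This does not follow from ``$f$ is not linear'': a nonlinear birational map can have a linear iterate. Also, $k^{(0)}_m=1$ only covers $n\le m$. Use the step-$p$ monotonicity instead: $k^{(0)}_{m+ap}\ge k^{(0)}_m=1$ for all $a\ge 0$. Since $p<m$, every window $[n,n+m-1]$ with $n\ge 1$ contains $m$ or one of the indices $m+ap$. With these two repairs, your squeeze argument is complete.
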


The closed formula in Theorem~\ref{2order p=1 q=2 Thm} and the following conjecture were found independently of \cite{BykovskiiRomanovUstinov2021,Romanov2021}, based on explicit computations of growth rate sequences carried out during the author's master's studies. The later computations in \cite{BykovskiiRomanovUstinov2021} support them: for the Somos-6 and Somos-7 recurrences, that is, for $(p,q,r)=(1,2,3)$ and $(1,2,4)$, they yield quadratic formulas with leading coefficients $1/40$ and $1/60$ and periodic corrections of periods $20$ and $30$, respectively, in agreement with the values $1/(2N)$ and $N$ predicted below; see also \cite{Romanov2021} for further particular Gale--Robinson cases. Now, we formulate the conjecture.

Let $\{k_n\}_{n\geq 1}$ be the sequence of integers such that $$k_1=\ldots=k_{m-1}=0 \text{ and } k_m=1$$ and given by the following recurrence
\begin{equation}\label{3 order k_n}
      k_{n+m}=\max(k_{n+p}+k_{n+m-p},k_{n+q}+k_{n+m-q},k_{n+r}+k_{n+m-r})-k_n
\end{equation} where $1\leq p < q < r$ and $m=p+q+r$. We call this $\{k_n\}_{n\geq 1}$ growth rate sequence of three-term Gale--Robinson. 
    
\begin{conjecture}\label{conj}
    If $\gcd(p,q,r)=1$ then $$k_n=\dfrac{n^2+c_n}{2N}$$ for all $n\geq1$ where $\{c_n\}_{n\geq 1}$ is a periodic sequence with period $$N=\gcd(p,q)r^2+\gcd(p,r)q^2+\gcd(q,r)p^2+pqr.$$ 
\end{conjecture}

The conjecture predicts that the growth rate sequence of every three-term Gale--Robinson recurrence with $\gcd(p,q,r)=1$ is quadratic up to a periodic correction. In view of Corollary~\ref{aeofG-R}, this would give the exact degree growth, refining the known zero-entropy result. Moreover, it gives an explicit candidate for both the quadratic growth coefficient and the period of the correction term in terms of the three parameters.
We prove the conjecture for the following infinite family.

\begin{theorem}\label{thm on 3order growth rate of GR}
Let $\{k_n\}_{n\geq1}$ be the growth rate sequence of the three-term Gale--Robinson recurrence with $p=1$, $q=2$, and $r\geq3$. Then Conjecture~\ref{conj} holds.
\end{theorem}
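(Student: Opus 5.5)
The plan is to replace the conjectured closed form by an equivalent statement about second differences, and then prove that statement by strong induction on $n$ using an explicit description of one period. Here $p=1$, $q=2$, $m=r+3$, and the formula of Conjecture~\ref{conj} gives
$$N=r^2+2r+4+\gcd(2,r).$$
Put $c_n:=2Nk_n-n^2$. Saying that $\{c_n\}$ is $N$-periodic is the same as saying that
$$e_n:=k_{n+1}-2k_n+k_{n-1}$$
is $N$-periodic with $\sum_{j=1}^{N}e_{n+j}=1$, together with the correct value of the first difference at the start.

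The recurrence \eqref{3 order k_n} reads
$$k_{n+m}+k_n=\max\{k_{n+1}+k_{n+m-1},\;k_{n+2}+k_{n+m-2},\;k_{n+3}+k_{n+r}\}.$$
Each difference between the two sides of a single term telescopes into a double sum of the $e_j$. For example, $(k_{n+m}-k_{n+m-1})-(k_{n+1}-k_n)=\sum_{j=n+1}^{n+m-1}e_j$ counts the second-difference mass in a window of length $m-1$. So every term in the max can be written through windowed sums of $e$. The max is then governed by a comparison of three weighted window sums of $e$ of width about $m$.

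\textbf{Step 1: find the pattern on one period.} Compute $k_n$ for small $r$ in each parity class and read off the positions where $e_n\neq 0$; I expect these to be $0/1$ values arranged in an arithmetic-like pattern modulo $N$. Then conjecture an explicit rule describing $e_n$ for $1\le n\le N+m$ in terms of the residues of $n$ modulo $r$ and modulo $r+2$ or so. The appearance of $r^2+2r$ in $N$ suggests blocks of lengths $r$ and $r+2$, with $4+\gcd(2,r)$ extra boundary positions. Also record, for each $n$, which of the three indices $a\in\{1,2,r\}$ attains the max.

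\textbf{Step 2: the key identity.} Once the active index $a$ is known, the recurrence becomes linear: $k_{n+m}+k_n=k_{n+a}+k_{n+m-a}$. Substituting $k_n=(n^2+c_n)/(2N)$ turns this into
$$c_{n+m}+c_n-c_{n+a}-c_{n+m-a}=2a(m-a),$$
because $(n+m)^2+n^2-(n+a)^2-(n+m-a)^2=2a(m-a)$.

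\textbf{Step 3: the induction.} Suppose $k_j=(j^2+c_j)/(2N)$ holds for all $j<n+m$, with $c$ the candidate periodic sequence from Step 1. Then the three candidate values $k_{n+a}+k_{n+m-a}-k_n$ are explicit functions of $n \bmod N$. Two checks remain, both of which reduce to finitely many residue classes of $n$ modulo $N$, uniformly in $r$:
\begin{enumerate}
\item the predicted $a$ really maximizes, which is a set of inequalities among values of $c$ plus the constants $2a(m-a)$;
\item the identity of Step 2 holds for that $a$.
\end{enumerate}
The base range $1\le n\le m+N$ must also be checked against the initial data $k_1=\dots=k_{m-1}=0$, $k_m=1$. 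Since the prescribed $c$ is periodic, the induction then propagates periodicity to all $n$.

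\textbf{Main obstacle.} The hard part is Step 1 together with check (1) of Step 3: finding a closed description of $c_n$ (equivalently of $e_n$) valid for all $r\ge3$, and proving the max-selection inequalities uniformly in $r$. I would first guess $e_n$ from data and verify the inequalities case by case on the blocks. The two parities of $r$ would be handled separately, since $N$ changes form with $\gcd(2,r)$.
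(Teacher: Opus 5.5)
\paragraph{The gap: the core step is deferred.} Your Step~3 induction has a sound logical frame: a candidate that matches the initial data and satisfies the recurrence, with the correct maximizing term, at every residue must equal $k$ by uniqueness. The problem is that all of the content sits in Step~1 and check~(1), and you defer both to ``guess $e_n$ from data''. You never give a candidate $c_n$ valid for all $r\ge3$. You also give no argument that the predicted index attains the maximum.

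Your claim that everything reduces to finitely many residue classes modulo $N$ ``uniformly in $r$'' is unsupported. There are $N\approx r^2$ residues, and only an explicit block description turns them into finitely many case types. The missing ingredient is the formula the paper proves by induction. Write $n=(i-1)(m-1)+j$ with $1\le j\le m-1$, so the blocks have length $m-1=r+2$ (not alternating lengths $r$ and $r+2$). Then
\[
k_n=\binom{i}{2}+\max\Bigl(\Bigl\lceil \tfrac{j+2(i-1)-(m-1)}{2}\Bigr\rceil,0\Bigr),\qquad 1\le n<N/2,
\]
with a correction of $-1$ at $n=N/2$ when $m$ is odd. With this formula, the three candidate increments $k_{n+a}+k_{n+m-a}-2k_n$ are explicit in $(i,j)$. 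The max-selection becomes a finite case split on $j$ relative to $m-2i$ and the parity of $m+j$.

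\paragraph{You also do not need a full period.} Your plan requires an explicit description of $c_n$ over a full period, and you can avoid this. The recurrence has two symmetries:
\begin{itemize}
\item it is invariant under adding an arithmetic progression to $k$, since on both sides the two indices sum to $2n+m$;
\item it is invariant under reversing the order of indices.
\end{itemize}
Combining these gives two facts, each from only $m$ base cases:
\begin{itemize}
\item $k_{N/2+n}=k_{N/2-n}+n$ follows by induction once it is checked for $1\le n\le m$;
\item $k_{N+n}=k_n+n+N/2$ holds for all $n$ once it is checked for $1\le n\le m$, because $l_n=k_{N+n}-n-N/2$ satisfies the same recurrence with the same initial data.
\end{itemize}
Periodicity of $c_n$ is then immediate. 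Explicit work is needed only on the first half-period and on two windows of length $m$.

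\paragraph{Smaller errors.}
\begin{itemize}
\item In Step~2 the correct identity is $c_{n+m}+c_n-c_{n+a}-c_{n+m-a}=-2a(m-a)$. For Somos-4 at $n=1$ the paper's values give $-6$.
\item The second differences are not $0/1$-valued. For $r=3$ one has $k_5=0$ and $k_6=k_7=1$, so $e_6=-1$.
\end{itemize}
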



The results of this paper therefore fit into the broader approach to the study of degree growth of rational maps with Laurent property. Our main contribution is the direct degree formula in Theorem \ref{general} and the explicit analysis of the max-linear recurrence in Theorem \ref{2order p=1 q=2 Thm}. For three-term Gale--Robinson recurrences, we further formulate a conjecture describing the corresponding growth rate sequences by a quadratic term with a periodic correction. We prove this conjecture for the infinite family with parameters $p=1$, $q=2$, and $r\geq3$, providing evidence for the proposed general formula and establishing quadratic growth. To the best of our knowledge, this uniform formula, including the explicit period of the periodic correction term, has not previously been established for this infinite family.

 
%
\section{Preliminaries}

 \subsection{Discrete Valuation Rings}
 We recall the following general definitions. Our main reference is the book \cite{dummit} by Dummit and Foote.
 \begin{definition}
 Let $\mathbb{F}$ be a field. A discrete valuation is a map $v\colon \mathbb{F}\setminus \{0\}\rightarrow \mathbb{Z}$ with the following properties:
 	\begin{enumerate}
 		\item $v$ is surjective,
 		\item $v(xy)=v(x) + v(y)$ for all $x$, $y$ in $\mathbb{F}\setminus \{0\}$,
 		\item $v(x+y)\geq \min(v(x),v(y))$ for all $x$, $y$ in $\mathbb{F}\setminus \{0\}$ with $x+y\neq 0$.
 	\end{enumerate}
 The set $R_v=\{x\in\mathbb{F}\mid v(x)\geq0 \}\cup\{0\}$ is then a subring, called the valuation ring of $v$. An integral domain is called a discrete valuation ring if it is the valuation ring of a discrete valuation $v$. A valuation ring $R_v$ has a unique maximal ideal $M_v:=\{x\in\mathbb{F}\mid v(x)>0 \}\cup \{0\}$.
 \end{definition}
  Every non-zero element $g$ in $\mathbb{F}(x_0,\ldots,x_{m-1})$ can be written uniquely in the form $$g=x_0^n\cdot\dfrac{p}{q},$$ where $n\in \mathbb{Z}$ and $p,q \in \mathbb{F}[x_0,\ldots,x_{m-1}]$ are coprime and both $p$ and $q$ are relatively prime to $x_0$. Then the formula $v_{x_0}(g)=n$ defines a valuation $v_{x_0}$ on the field $\mathbb{F}(x_0,\ldots,x_{m-1})$, called the $x_0$-adic valuation.
 \begin{definition}
 	Let $g$ be a polynomial in $R=\mathbb{F}[x_1^{\pm},\ldots,x_s^{\pm}]$. We know that $g$ can be written as $$g=x_i^{v_{x_i(g)}} g^{(i)}, \qquad 0\leq i \leq m-1,$$ where $v_{x_i}$ is the $x_i$-adic valuation and $v_{x_i}(g^{(i)})=0$. Consider the image $\bar{g}^{(i)}$ of $g^{(i)}$ in the residue field $\mathbb{F}\left(x_0,\ldots,x_{i-1},x_{i+1},\ldots,x_{m-1}\right)$ for $0\leq i \leq m-1$. The image $\bar{g}^{(i)}$ is called the $x_i$-adic image of $g$.
 \end{definition}
 
\begin{example}\label{cancellationex}
  Let $f$ and $g$ be polynomials in $\mathbb{Q}[x_0^\pm,x_1^\pm,x_2^\pm,x_3^\pm]$ given by 
  \begin{align*}
  &f=x_0x_1+x_1x_3+x_0x_2, \\
  &g=x_1x_3-x_0x_2.
  \end{align*} We see that $v_{x_0}(f)=v_{x_0}(g)=0$, $\bar{f}^{(0)}=x_1x_3$ and $\bar{g}^{(0)}=x_1x_3$. It follows that $x_0 \nmid f+g$. On the other hand, $x_1 \mid f+g$, since $\bar{f}^{(1)}+\bar{g}^{(1)}=-x_0x_2+x_0x_2=0.$ 
\end{example}
\begin{definition}
	Consider the set $\mathbb{Z}^{m}$ of $m$-tuples of integers with the lexicographic ordering $\preceq$ defined by the formula $\rho=(\rho_0,\ldots,\rho_{m-1})\preceq\gamma=(\gamma_0,\ldots,\gamma_{m-1})$ if $\rho=\gamma$ or $\rho_i=\gamma_i$ and $\rho_s<\gamma_s$ for some $0\leq s \leq m-1$ and all $0 \leq i <s$.
\end{definition}
\begin{definition}
	We call a set $S\subset \mathbb{Z}^m$ bounded from below if there exists some $c\in \mathbb{Z}$ with $\rho_i\geq c$ for any $\rho=(\rho_0,\ldots,\rho_{m-1})\in S$ and $0\leq i \leq m-1$.
\end{definition}
\begin{definition}
	Let $\rho\in \mathbb{Z}^m$ be an $m$-tuple of integers. We say that $\rho\in S$ is lexicographically minimal in $S$ if $\rho \preceq\gamma$ for all $\gamma \in S$.
\end{definition}
Note that if $S,T\subset \mathbb{Z}^m$ are bounded from below, $\rho$ is the lexicographically minimal element in $S$, and $\tau$ is the lexicographically minimal element in $T$, then the algebraic sum $S+T$ is also bounded from below, and its lexicographically minimal element is $\rho+\tau$.
\begin{definition}
	Let $f\in \mathbb{Z}[x_0^\pm,\ldots,x_{m-1}^\pm]$ be a non-zero Laurent polynomial, and write
	\begin{equation*}
	f=\sum_{\rho \in \mathbb{Z}^{m}}c_{\rho}x^{\rho}. 
	\end{equation*} We define $\mu(f)$ as the coefficient $c_{\rho}$, where $\rho$ is lexicographically minimal in the set $\{ \rho\in \mathbb{Z}^m \mid c_\rho \neq 0 \}$. Additionally, we denote the $\mu(\bar{f}^{(i)})$ by $\mu_i(f)$ where $\bar{f}^{(i)}$ $x_i$-adic image for $f$. 
\end{definition}
We will use the function $\mu$ to show that performing certain operations on Laurent polynomials does not lead to undesired cancellations.

\section{Computing Algebraic Entropy}

\subsection{Proof of Theorem \ref{general}}

\begin{proof}
  
  First, we will show by induction on $n$ that $\mu_i(x_n)>0$ for all $0\leq i\leq m-1$ and all $n\geq0$. It is clear that $\mu_i(x_j)=1$ for $0\leq i,j \leq m-1$. Now, we fix $i\in \{0,\ldots,m-1\}$ and $n\geq0$. Assume that we have $\mu_i(x_j)>0$ for all $j\leq n+m-1$. We will show that $\mu_i(x_{n+m})>0$. By equation \eqref{eqn:form-of-rec}, we have $$\mu_i(x_{n+m}x_n)=\mu_i\left( \sum_{s=1}^{m-1}\sum_{t=1}^{m-1}c_{s,t}x_{n+s}x_{n+t} \right).$$ 
  Now, we look at the $x_i$-adic valuation of the sum $$S=\sum_{s=1}^{m-1}\sum_{t=1}^{m-1}c_{s,t}x_{n+s}x_{n+t}.$$ Let $$\nu=\min\{v_{x_i}(x_{n+s}x_{n+t})\mid 1\leq s,t \leq m-1, c_{s,t}\neq0  \}$$ be the minimal valuation among the terms of the sum $S$. We split the sum $S$ into two summands $S=S_1+S_2$, where $S_1$ is the sum of the terms $c_{s,t}x_{n+s}x_{n+t}$ of valuation $\nu$, and $S_2$ is the sum of the terms of higher valuation. We claim that $v_{x_i}(S_1)=\nu$; indeed, passing to the residue field of the valuation $v_{x_i}$, and looking at the $x_i$-adic images of the terms in $S_1$, we know by induction that the leading terms $\mu_i(c_{s,t}x_{n+s}x_{n+t})$ are strictly positive. It follows that the leading term of the sum of these $x_i$-adic images is also positive, and in particular does not vanish. Hence indeed, $v_{x_i}(S_1)=\nu$ and $\mu_i(S_1)>0$. Since the terms in $S_2$ have higher valuation, we have $v_{x_i}(S)=v_{x_i}(S_1)$ and $\mu_i(S)=\mu_i(S_1)>0$. Hence, $$\mu_i(x_{n+m})\mu_i(x_n)=\mu_i(x_{n+m}x_n)>0.$$ Therefore, $\mu_i(x_{n+m})>0.$ Furthermore, we obtain that 
  \begin{align}\label{xiadicval}
  v_{x_i}(x_{n+m})=\min\{v_{x_i}(x_{n+s})+v_{x_i}(x_{n+t}) \mid c_{s,t}\neq 0  \}-v_{x_i}(x_n),
  \end{align}
  for $1\leq s,t\leq m-1$.
  %
  
Observe that for the initial variables we have $x_i$-adic valuations for $0\leq i,j \leq m-1$
\[
    v_{x_i}(x_j)=\begin{cases}
        0 &\text{ if } j\neq i,\\
        1 &\text{ otherwise}.
    \end{cases}
\]
We claim that for $0\le i\le m-1$
$$
v_{x_i}(x_n)=
\begin{cases}
0 & \text{if } 0\le n\le m+i-1,\ n\neq i,\\
1 & \text{if } n=i,\\
-1 & \text{if } n=m+i.
\end{cases}
$$
Indeed, fix $0\leq i \leq m-1$, and let $n=n'+m$ with $0\leq n'\leq i$. Note that $P$ is not divisible by $x_{i-n'}$. Thus, there is a pair $(s,t)\in C$ with $i-n'\notin \{s,t\}$. Therefore, by induction on $n'$ we get that the minimum valuation in \eqref{xiadicval} is $0$. Hence, $v_{x_i}(x_{n'+m})=-v_{x_i}(x_{n'})$ for $0\leq n'\leq i$ which proves the claim.

Now set $\kappa_n^{(i)}:=-v_{x_i}(x_n)$ for $n\neq i$ and $\kappa_i^{(i)}:=0$. Then
$$\kappa_0^{(i)}=\cdots=\kappa_{m-1+i}^{(i)}=0,\qquad \kappa_{m+i}^{(i)}=1,$$
and, using \eqref{xiadicval} and $\kappa_n^{(i)}:=-v_{x_i}(x_n)$ we obtain
$$\kappa^{(i)}_{n+m} = \max_{(s,t)\in C}\big(\kappa^{(i)}_{n+s}+\kappa^{(i)}_{n+t}\big) - \kappa^{(i)}_{n}
$$
for $n\geq i+1$.

Now, we will observe that $\kappa_n^{(i)}\geq 0$ for all $n\geq 0$. By assumption, $(t,m-t)\in C$, hence
$$\kappa_{n+m}^{(i)}+\kappa_n^{(i)}\geq \kappa_{n+t}^{(i)}+\kappa_{n+m-t}^{(i)}, \qquad n\geq i+1.$$
We show by strong induction on $N$ that $\kappa_N^{(i)}\geq \kappa_{N-t}^{(i)}$ for $N\geq t$ and $\kappa_N^{(i)}\geq \kappa_{N-(m-t)}^{(i)}$ for $N\geq m-t$. If $N\leq m+i$, this is clear from the initial values, since the only nonzero value among $\kappa_0^{(i)},\ldots,\kappa_{m+i}^{(i)}$ is $\kappa_{m+i}^{(i)}=1$. If $N\geq m+i+1$, write $N=n+m$. 
Now assume that for $n'< N$ we have
\[
\kappa^{(i)}_{n'}\geq \kappa^{(i)}_{n'-t}, \text { for } n'\geq t \text{ and } \kappa^{(i)}_{n'}\geq \kappa^{(i)}_{n'-(m-t)} \text{ for } n'\geq m-t.
\]
Thus, we get
$$\kappa_{N}^{(i)}-\kappa_{N-t}^{(i)}\geq \kappa_{n+t}^{(i)}-\kappa_n^{(i)}\geq 0
\quad\text{and}\quad
\kappa_{N}^{(i)}-\kappa_{N-(m-t)}^{(i)}\geq \kappa_{n+m-t}^{(i)}-\kappa_n^{(i)}\geq 0,$$
since $n+t<N$ and $n+m-t<N$. Now, for $N\geq 0$ write $N=qt+r$ with $0\leq r<t$. Then
$$\kappa_N^{(i)}\geq\kappa_{N-t}^{(i)}\geq\cdots\geq\kappa_r^{(i)}=0.$$
Consequently, $v_{x_i}(x_n)\leq 0$ for all $n\neq i$ since $\kappa_n^{(i)}=-v_{x_i}(x_n)$ for $n \neq i$.

Note that $\{x_n\}_{n\geq0}$ has the Laurent property. Therefore, we can write $x_n$ uniquely in the form
\begin{equation}\label{Lproperty}
x_n=\dfrac{g_n}{x_0^{k_n^{(0)}}\cdots x_{m-1}^{k_n^{(m-1)}}}, \qquad n\geq 0,
\end{equation}
where $g_n$ is a polynomial in $x_0,\ldots,x_{m-1}$, the $k_{n}^{(i)}$ are non-negative integers, and for each $0\leq i\leq m-1$ either $k_n^{(i)}=0$ or $x_i\nmid g_n$. In particular,
$$v_{x_i}(x_n)=v_{x_i}(g_n)-k_n^{(i)}, \qquad v_{x_i}(g_n)\geq 0.$$
Now let $n\neq i$. If $k_n^{(i)}>0$, then $x_i\nmid g_n$, so $v_{x_i}(g_n)=0$ and $k_n^{(i)}=-v_{x_i}(x_n)$. If $k_n^{(i)}=0$, then $v_{x_i}(g_n)=v_{x_i}(x_n)\leq 0$. Thus $v_{x_i}(g_n)=0$, which means $x_i\nmid g_n$. Therefore, $k_n^{(i)}=0=-v_{x_i}(x_n)$. Thus, for $n\neq i$ we have $x_i\nmid g_n$ and
$$k_n^{(i)}=-v_{x_i}(x_n)=\kappa_n^{(i)}.$$
For $n=i$ we have $x_i=x_i/1$, so $k_i^{(i)}=0=\kappa_i^{(i)}$. Therefore $k_n^{(i)}=\kappa_n^{(i)}$ for all $n\geq 0$, and the sequences $\{k_n^{(i)}\}_{n\geq0}$ satisfy the initial conditions and the recurrence in the statement.

%
%

Finally, we can compute the degree of iterates of the map $f$. We write the $n^{\text{th}}$ iterate of the map f as $$f^n\colon (x_0,\ldots,x_{m-1})\mapsto (x_n,\ldots,x_{n+m-1}).$$ We denote $m_n^{(i)}:=\max\{k_j^{(i)}\mid n\leq j\leq n+m-1 \}$ for $0\leq i \leq m-1$. In projective coordinates, the map $f^n$ takes the form
	\begin{align*}
	(x_0:\dots:x_{m-1}:y)\mapsto \bigg(&G_nx_0^{(m_n^{(0)}-k_{n}^{(0)})}\cdots x_{m-1}^{(m_{n}^{(m-1)}-k_{n}^{(m-1)})}:\\
	&\dots:\\
	&G_{n+m-1}x_0^{(m_n^{(0)}-k_{n+m-1}^{(0)})}\cdots x_{m-1}^{(m_{n}^{(m-1)}-k_{n+m-1}^{(m-1)})}:\\
	&x_0^{m_n^{(0)}}\cdots x_{m-1}^{m_n^{(m-1)}}y \bigg),
	\end{align*} 
	where $G_j=y^{\deg(g_j)}g_j\left(\frac{x_0}{y},\dots,\frac{x_{m-1}}{y}\right)$  are homogenizations of the polynomials $g_j$'s, and are of degree $$k_j^{(0)}+\dots+k_j^{(m-1)}+1.$$ We claim that the projective coordinates of $f^n$ have no common factor for all $n\geq 0$. Indeed, $y$ can not be a common factor since $G_j$'s are homogenizations of the polynomials $g_j$'s. 
    Now, if $m_n^{(i)}>0$ for any $0\leq i\leq m-1$, then $m_n^{(i)}=k_j^{(i)}>0$ for some $n\leq j \leq n+m-1$. Thus, $x_i \nmid g_j$, so $x_i \nmid G_j$. Now we look at $j^\text{th}$ projective coordinate of the $f^n$ and we see that $$x_i \nmid G_jx_0^{m_n^{(0)}-k_j^{(0)}}\dots x_{i-1}^{m_n^{(i-1)}-k_j^{(i-1)}}x_{i+1}^{m_n^{(i+1)}-k_j^{(i+1)}}\dots x_{m-1}^{m_n^{(m-1)}-k_j^{(m-1)}},$$ since $m_n^{(i)}=k_j^{(i)}$. Additionally, if $m_n^{(i)}=0$, then $x_i \nmid x_0^{m_n^{(0)}}\cdots x_{m-1}^{m_n^{(m-1)}}y$. Hence, $x_i$ can not be a common factor for projective coordinates. 
	Therefore, $$\deg(f^n)=m_n^{(0)}+\cdots+m_n^{(m-1)}+1.$$\qedhere
	\end{proof}

\begin{remark}
If $\gcd(m,t)=1$ in Theorem~\ref{general}, then for every $0\leq i\leq m-1$ there exists $N_i\geq0$ such that $k_n^{(i)}>0$ for all $n\geq N_i$; that is, every initial variable eventually appears in the denominators of the iterates. 
\end{remark}
\begin{proof}
    Note that $\gcd(t,m-t)=\gcd(m,t)=1$. Hence, there exists $N\geq0$ such that every $n\geq N$ can be written as $n=at+b(m-t)$ with $a,b\geq0$. By the monotonicity shown in the proof,
$$k_{n+m+i}^{(i)}\geq k_{m+i}^{(i)}=1, \qquad n\geq N,$$
so one can take $N_i=N+m+i$.
\end{proof}

	\begin{corollary}\label{cor:p=1-gives-max}
		Assume that $c_{s,m-1}\neq 0$ in Theorem \ref{general}. Then $$\deg(f^n)=\left(\sum_{i=0}^{m-1}k_{n+m-1}^{(i)}\right)+1.$$
	\end{corollary}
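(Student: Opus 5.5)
We plan to deduce Corollary~\ref{cor:p=1-gives-max} from Theorem~\ref{general}. It suffices to show that, for each fixed $0\leq i\leq m-1$, the sequence $\{k_n^{(i)}\}_{n\ge0}$ is non-decreasing. Then $\max\{k_j^{(i)}\mid n\leq j\leq n+m-1\}=k_{n+m-1}^{(i)}$, and summing over $i$ in the formula of Theorem~\ref{general} gives the claimed degree.

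Monotonicity will come from the pair $(s,m-1)\in C$. The hypothesis $c_{s,m-1}\neq0$ makes sense as an instance of Theorem~\ref{general} with $t=m-1$, $m-t=1$, because we read it as $c_{1,m-1}\neq 0$, or equivalently $c_{m-1,1}\neq0$ by symmetry of the quadratic form. So the pair $(1,m-1)$ lies in $C$.

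In the proof of Theorem~\ref{general} it was shown by strong induction that, whenever $(t,m-t)\in C$, we have $k_N^{(i)}\geq k_{N-t}^{(i)}$ for $N\geq t$ and $k_N^{(i)}\geq k_{N-(m-t)}^{(i)}$ for $N\geq m-t$. Applying this with $t=m-1$, the second inequality reads $k_N^{(i)}\geq k_{N-1}^{(i)}$ for all $N\geq1$. We will reproduce that induction briefly for this special case:
\begin{itemize}
\item For $N\leq m+i$, monotonicity is immediate from the initial values $0,\dots,0,1$.
\item For $N=n+m\ge m+i+1$, the recurrence gives $k_{N}^{(i)}+k_n^{(i)}\geq k_{n+1}^{(i)}+k_{n+m-1}^{(i)}$. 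Hence $k_N^{(i)}-k_{N-1}^{(i)}\geq k_{n+1}^{(i)}-k_n^{(i)}\geq0$ by the induction hypothesis, since $n+1<N$.
\end{itemize}

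The only delicate point is interpreting the hypothesis. If $c_{s,m-1}\neq0$ is meant for a general $s$ rather than $s=1$, the pair $(s,m-1)$ only gives $k_N+k_n\ge k_{n+s}+k_{n+m-1}$, i.e.\ $k_N-k_{N-1}\ge k_{n+s}-k_n$. This is still nonnegative if we carry along the inductive statement $k_{j}\ge k_{j-s}$. We would then run a joint induction on the two inequalities $k_N\ge k_{N-1}$ and $k_N\ge k_{N-s}$. The second follows from the first by telescoping, so the single induction on $k_N\ge k_{N-1}$ suffices: $k_{n+s}-k_n\ge0$ follows from monotonicity up to index $n+s<N$. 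Once monotonicity holds, the corollary follows directly from Theorem~\ref{general}.
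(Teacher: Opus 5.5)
Your proposal is correct and follows the paper's proof: the paper runs the same single induction on $k_N^{(i)}\ge k_{N-1}^{(i)}$ as your last paragraph, using $(s,m-1)\in C$ for general $s$ with $k_{n+s}^{(i)}\ge k_n^{(i)}$ coming from monotonicity below index $N$, and then reads off the maximum as $k_{n+m-1}^{(i)}$. Your preliminary reading $s=1$ is not needed, and neither is the appeal to ``symmetry'' of the $c_{s,t}$, since the max in the recurrence is already symmetric in $(s,t)$; neither affects the argument.
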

	\begin{proof}
		We claim that by the assumptions of the Theorem~\ref{general} the sequences $\{k_n^{(i)}\}_{n\geq 0}$ are non-decreasing sequences for all $0\leq i\leq m-1.$
        We will show this by induction on $n$. By equation \eqref{eqn:form-of-rec}, we see that $$k_0^{(i)}=k_1^{(i)}=\cdots=k_{m-1+i}^{(i)}=0,$$ and $k_{m+i}^{(i)}=1$. Now suppose, $k_j^{(i)}\geq k_{j-1}^{(i)}$ for some $n>i$ and all $n\leq j\leq n+m-1$. We will show that $k_{n+m}^{(i)}\geq k_{n+m-1}^{(i)}.$ Since $c_{s,m-1}\neq 0$, we have $$k_{n+m}^{(i)}+k_{n}^{(i)}\geq k_{n+m-1}^{(i)}+k_{n+s}^{(i)}. $$ Then we get $$k_{n+m}^{(i)}\geq k_{n+m-1}^{(i)}+ k_{n+s}^{(i)}-k_{n}^{(i)}$$ By the induction assumption $k_{n+s}^{(i)}-k_{n}^{(i)}\geq 0$. Thus, $$k_{n+m}^{(i)}\geq k_{n+m-1}^{(i)},$$ which completes the inductive proof. Therefore, the result follows then immediately from Theorem \ref{general} since the $m_n^{(i)}=k_{n+m-1}^{(i)}$ for all $n$.
		\end{proof}
	In the following example, we note the necessity of the assumption  $c_{t,m-t}\neq 0$ for some $t$.
	\begin{example}
		Let $\{k_n\}_{n\geq 1}$ be sequence of integers given by $$ k_{n+6}+k_n=\max(2k_{n+1},2k_{n+2})$$
        with the initial values $k_1=\ldots =k_5=0$, and $k_6=1$. 
        One can see that $\{k_n\}_{n\geq 1}$ takes negative integer values.
	\end{example}

\subsection{Max-linearly recurrent sequences}\label{section311}
Note that sometimes we index the sequence $\{k_n\}$ by $n\geq1$ for convenience. This indexing does not necessarily correspond to the indexing of the iterations of the rational map.
\begin{definition}
Call a sequence $\{k_n\}_{n\geq1}$ of real numbers max-linearly recurrent if there exist $d\geq 1$, $s\geq 1$ and real numbers $c_j^{(i)}$ for $1\leq i\leq s$ and $1\leq j \leq d$ such that $$k_n=\max\bigg(\sum_{j=1}^{d}c_j^{(1)}k_{n-j},\sum_{j=1}^{d}c_j^{(2)}k_{n-j},\ldots,\sum_{j=1}^{d}c_j^{(s)}k_{n-j}\bigg), \qquad n\geq d+1.$$
\end{definition}
Every linearly recurrent sequence is max-linearly recurrent. It is an interesting question when a max-linearly recurrent sequence is also linearly recurrent. This is certainly the case if one of the expression $\sum_{j=1}^{d} c_j^{(i_0)}k_{n-j}$ is larger than all $\sum_{j=1}^{d}c_j^{(i)}k_{n-j}$ for some $i_0$ and all sufficiently large $n$. However, this is only a sufficient condition. For a non-trivial example, let $\{k_n\}_{n\geq1}$ be a sequence of non-negative integers with the recurrence 
\begin{equation}\label{mgeq4}
k_{n+m}+k_n=\max(k_{n+m-1}+k_{n+1},k_{n+m-2}+k_{n+2}), \qquad m\geq4. 
\end{equation} Notice that, $\{k_n\}_{n\geq1}$ is a max-linearly recurrent sequence. In the next proof we will show that $\{k_n\}_{n\geq 1}$ is also linearly recurrent sequence for initial values $$k_1=k_2=\dots=k_{m-1}=0 , \text{ and } k_m=1,$$ but in the expression \ref{mgeq4} the maximum realised by both $k_{n+m-1}+k_{n+1}$ and $k_{n+m-2}+k_{n+2}$ for infinitely many $n$. In fact, in this case we are able to give a closed formula for the values $k_n$.
\subsection{Proof of Theorem \ref{2order p=1 q=2 Thm}}
\begin{proof}
	For an integer $1\leq n \leq N-2,$ write $n=(i-1)(m-1)+j$ with $1\leq i \leq m-2$ and $1\leq j \leq m-1$. First, we claim that 
	\begin{equation}\label{formula for k_n p=1 q=2}
	k_n=\binom{i}{2}+\max(j-m+i,0), \qquad 1\leq n \leq N-2.
	\end{equation} The formula is clear for $1\leq n \leq m$. If $n$ is such that $1\leq n \leq N-m-2$, then we put 
	\begin{align*}
	\delta_1=\begin{cases}
	1 & \text{if } i+j \geq m,\\
	0 & \text {otherwise} \end{cases}, \ 
	\delta_2=\begin{cases}1 & \text{if } i+j\geq m-1, \\ 0 & \text{otherwise}\end{cases}, \ 
	\delta_3=\begin{cases}
	1 & \text{if } j=1\\
	0 & \text{otherwise}.
	\end{cases}
    \end{align*} Then a direct computation implies that for all $n$ such that the formula \eqref{formula for k_n p=1 q=2} holds for all the terms of the sequence $\{k_n\}_{n\geq1}$ occurring in the corresponding equations, we have
    \begin{align*}
    &k_{n+1}=k_n+\delta_1,\\
    &k_{n+2}=k_n+\delta_1+\delta_2,\\
    &k_{n+m-1}=k_{n}+i+\delta_1,\\
    &k_{n+m-2}=k_{n}+i-\delta_3,\\
    &k_{n+m}=k_{n}+i+\delta_1+\delta_2.
    \end{align*} Suppose the formula \eqref{formula for k_n p=1 q=2} holds for $k_1,\ldots,k_{n+m-1}$. Then 
    \begin{align*}
    k_{n+m}=&\max(k_{n+m-1}+k_{n+1}-k_n,k_{n+m-2}+k_{n+2}-k_n)\\&=\max(k_{n}+i+2\delta_1,k_n+i+\delta_1+\delta_2-\delta_3)\\&=k_n+i+\delta_1+\delta_2+\max(\delta_1-\delta_2,-\delta_3).
    \end{align*} We have $\delta_1-\delta_2 \in \{0,-1\}$, $-\delta_3\in \{0,-1\}$ and $\max(\delta_1-\delta_2,-\delta_3)=0$, since if we had $$\delta_1-\delta_2=-\delta_3=-1,$$ then $i+j=m-1$, and $j=1$, which would mean that $n=N-m$, a contradiction. Thus $$k_{n+m}=k_n+i+\delta_1+\delta_2$$ and therefore the formula \eqref{formula for k_n p=1 q=2} is also satisfied for $k_{n+m}$, and hence by induction for all $1\leq n \leq N-2$. 
    
    Applying \eqref{formula for k_n p=1 q=2} for $ N-m-1\leq n\leq N-2$ we get that \begin{align}\label{formula delta4}
    k_n=n-\dfrac{N}{2}+\delta_4,
    \end{align} where 
    \begin{align*}
    \delta_4=\begin{cases}
    1 & \text{if } n\in \{N-m-1,N-m\},\\
    0 & \text{otherwise}.
    \end{cases}
    \end{align*}
   Using \eqref{2orderrec p=1 q=2} and \eqref{formula delta4} we get that 
   \begin{equation*}
   k_{N-1}=k_N=\dfrac{N}{2}-1.
   \end{equation*} 
   Thus, for $N-m-1\leq n \leq N$ we have 
   \begin{equation}\label{k_N}
   k_n=n-\dfrac{N}{2}+\delta_5,
   \end{equation}where 
   \begin{align*}
  \delta_5=\begin{cases}
   1 & \text{if } n\in \{N-m-1,N-m\},\\
   -1 & \text{if } n=N,\\
   0 & \text{ otherwise}.
   \end{cases}
   \end{align*}
   Using equations \eqref{2orderrec p=1 q=2} and \eqref{k_N}, it is easy to directly compute the values $k_{N+1},\ldots,k_{N+m}$, obtaining 
   \begin{equation}\label{k_N+n}
   k_{N+n}=\dfrac{N}{2}+n+\delta_6, \qquad 1\leq n \leq m,
   \end{equation} where 
   \begin{align*}
   \delta_6=\begin{cases}
   1 & \text{if } n=m,\\
   0 & \text{otherwise}.
   \end{cases}
   \end{align*}
   Now, we claim that 
	\begin{equation}\label{claim on k_N+n}
	k_{N+n}=\dfrac{N+2n}{2}+k_n
	\end{equation} for $1\leq n \leq N$. Since the form of the recurrence \eqref{2orderrec p=1 q=2} is invariant under changing the sequence by adding to it an arithmetic progression, we see that the sequence $\{l_n\}_{n\geq1}$ given by the formula $$l_n=k_{N+n}-\dfrac{N}{2}-n$$ satisfies the same recurrence \eqref{2orderrec p=1 q=2} and with initial values $l_1=\dots=l_{m-1}=0$ and $l_m=1$. Hence, $l_n=k_n$ for all $n\geq 1$.
	
	Finally, we define the sequence $\{c_n\}_{n\geq 1}$ as $c_n=2Nk_n-n^2$ for all $n\geq 1$. We use the equation \eqref{claim on k_N+n} to check that  
	\begin{align*}
	c_{N+n}&=2Nk_{n+N}-\left(n+N \right)^2=2Nk_n-n^2=c_n
	\end{align*} for all $n\geq 1$. Consequently, we see that $c_n$ is a periodic sequence with period $N$ and $k_n=\dfrac{n^2+c_n}{2N}$.    
\end{proof}

\subsubsection{Somos-4 and Somos-5 Maps}\label{somos4 and somos5}
	
	Let $f$ be the map $$f\colon \mathbb{A}^4 \ni (w,x,y,z)\mapsto (x,y,z,(xz+y^2)/w) \in \mathbb{A}^4 .$$ We consider a sequence of rational functions $\{x_n\}_{n\geq 0}$ with the first four elements $x_0=w$, $x_1=x$, $x_2=y$, $x_3=z$ and satisfy the following recurrence
	\begin{align}\label{somos4rec}
		x_{n+4}x_n=x_{n+3}x_{n+1} + x_{n+2}^2.
	\end{align}
	This recurrence is known as the Somos-4 recurrence, so we call $f$ the Somos-4 map. Similarly, Somos-5 map is a map from $\mathbb{A}^5$ to $\mathbb{A}^5$ which is given by the following recurrence 
	\begin{align}
	x_{n+5}x_n=x_{n+1}x_{n+4}+x_{n+2}x_{n+3}.
	\end{align}

\begin{corollary}\label{Somos-4 ex}
	Let $d_n$ be the degree of the $n^{\text{th}}$ iterate of the Somos-$4$ map. Assume that $\{k_n\}_{n\geq1}$ is a sequence of integers which is given by $k_n=\dfrac{1}{16}(n^2+c_n)$ where $\{c_n\}_{n\geq1}$ is a periodic sequence of integers with period $8$ and values $c_1=-1$, $c_2=-4$, $c_3=-9$, $c_4=0$, $c_5=-9$, $c_6=-4$, $c_7=-1$, $c_8=-16$, $c_9=-1$, $c_{10}=-4$, $c_{11}=-9$, $c_{12}=0$. Then $d_n=k_{n+3}+k_{n+2}+k_{n+1}+k_n+1.$ In particular, the algebraic entropy of the Somos-$4$ map is $0$.
\end{corollary}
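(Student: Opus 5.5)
The plan is to obtain the corollary as a specialization of Theorem~\ref{general}, via Corollary~\ref{cor:p=1-gives-max}, combined with Theorem~\ref{2order p=1 q=2 Thm} for $m=4$.

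\emph{Step 1: the hypotheses hold.} The Somos-$4$ recurrence \eqref{somos4rec} has the form \eqref{eqn:form-of-rec} with $m=4$ and $P=x_1x_3+x_2^2$. This $P$ is homogeneous quadratic with non-negative coefficients, and it is not divisible by any $x_i$. The support set is $C=\{(1,3),(3,1),(2,2)\}$. In particular $c_{1,3}\neq0$, so the condition $c_{t,m-t}\neq0$ holds with $t=1$, and the condition $c_{s,m-1}\neq0$ of Corollary~\ref{cor:p=1-gives-max} holds with $s=1$. The Laurent property of Somos-$4$ is the Fomin--Zelevinsky result \cite{FandZ}. Corollary~\ref{cor:p=1-gives-max} therefore gives
$$d_n=\sum_{i=0}^{3}k^{(i)}_{n+3}+1 .$$

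\emph{Step 2: reduce to a single sequence.} For each $i$, the sequence $k^{(i)}$ has:
\begin{itemize}
\item zeros at indices $0,\dots,3+i$;
\item the value $1$ at index $4+i$;
\item the recurrence $k^{(i)}_{n+4}+k^{(i)}_n=\max\{k^{(i)}_{n+3}+k^{(i)}_{n+1},\,2k^{(i)}_{n+2}\}$ for $n\ge i+1$.
\end{itemize}
Let $\{k_n\}_{n\ge1}$ be the sequence with $k_1=k_2=k_3=0$, $k_4=1$ satisfying the same recurrence for $n\ge1$, and set $k_0:=0$.

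By induction on the index, the shift $j\mapsto k_{j-i}$ reproduces both the initial values and the recurrence of $k^{(i)}$. Hence $k^{(i)}_j=k_{j-i}$ for $j\ge i$. Summing over $i$ gives
$$d_n=k_{n+3}+k_{n+2}+k_{n+1}+k_n+1 .$$
The boundary case $n=0$, where $k_0=0$ appears, should be checked separately, or one simply restricts to $n\ge1$ as in the statement.

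\emph{Step 3: identify the closed form.} The tropical Somos-$4$ recurrence is exactly \eqref{2orderrec p=1 q=2} with $m=4$, since then $k_{n+m-2}+k_{n+2}=2k_{n+2}$. Theorem~\ref{2order p=1 q=2 Thm} therefore applies with $N=(4-2)(4-1)+2=8$, so $2N=16$. It gives $k_n=(n^2+c_n)/16$ with $c_n$ periodic of period $8$.

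To pin down $c_n$, compute the first eight terms directly from the recurrence: $k_1,\dots,k_8=0,0,0,1,1,2,3,3$. Then $c_n=16k_n-n^2$ gives $-1,-4,-9,0,-9,-4,-1,-16$, and periodicity gives $c_9,c_{10},c_{11},c_{12}$ as listed in the statement.

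\emph{Step 4: entropy.} Since $d_n$ is a sum of four terms of order $n^2/16$, we have $d_n=O(n^2)$. Hence $\frac1n\log d_n\to0$, and the algebraic entropy of the Somos-$4$ map is $0$.

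The main obstacle is Step 2: the index shift between the sequences $k^{(i)}$ (indexed from $0$, with the recurrence valid only for $n\ge i+1$) and the single sequence $k_n$ (indexed from $1$). The induction identifying $k^{(i)}_j$ with $k_{j-i}$ has to start exactly where both recurrences become active. Everything else is a direct invocation of earlier results.
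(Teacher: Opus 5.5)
Your proposal is correct and follows the paper's proof essentially step for step. Theorem~\ref{general}, through the monotonicity of Corollary~\ref{cor:p=1-gives-max} (which the paper uses only implicitly), gives $d_n=\sum_{i=0}^{3}k^{(i)}_{n+3}+1$; the index shift $k^{(i)}_j=k^{(0)}_{j-i}$ reduces this to a single sequence; and Theorem~\ref{2order p=1 q=2 Thm} with $m=4$, $N=8$ supplies the closed form. Your explicit values of $k_1,\dots,k_8$ and $c_n$, together with your care at $n=0$, fill in details the paper only asserts: there you must use $k_0:=0$, since extending the periodic formula would give $k_0=(0+c_8)/16=-1$.
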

\begin{proof}
     It is known that Somos-$k$ sequences satisfy the Laurent property for $k\leq7$ (see \cite{AlmanCuencaHuang,FandZ,hone2008integrality}). By the Somos-4 recurrence we see that it is in the form of the equation \eqref{eqn:form-of-rec}. Thus, we write $$x_{n}=\dfrac{g_n}{x_0^{k_n^{(0)}}x_1^{k_n^{(1)}}x_2^{k_n^{(2)}}x_3^{k_n^{(3)}}}$$ where $g_n$ is a polynomial in $x_0,x_1,x_2,x_3$ and $k_n^{(i)}$ are some nonnegative integers for $0\leq i \leq 3$. If $k_n^{(i)}>0$ then $x_i$ does not divide $g_n$ for $0\leq i \leq 3$. 
     
     Using the Somos-4 recurrence \eqref{somos4rec} and Theorem \ref{general} we conclude that $$d_n=k_{n+3}^{(0)}+k_{n+3}^{(1)}+k_{n+3}^{(2)}+k_{n+3}^{(3)}+1, \qquad n\geq0,$$ where
     \begin{equation}\label{eqinSomos4}
     k_{n+4}^{(i)}+k_n^{(i)}=\max(k_{n+3}^{(i)}+k_{n+1}^{(i)},2k_{n+2}^{(i)}), \qquad 0\leq i \leq 3,\end{equation}
      and with the initial values $$k_{1}^{(i)}=k_2^{(i)}=\cdots=k_{3+i}^{(i)}=0, \ k_{4+i}^{(i)}=1 \text{ for } 0\leq i \leq 3.$$ Using equation \eqref{eqinSomos4} and initial values, it is easy to see that $$k_{n+3}^{(0)}=k_{n+4}^{(1)}=k_{n+5}^{(2)}=k_{n+6}^{(3)}.$$
      Therefore, $d_n=k_{n+3}^{(0)}+k_{n+2}^{(0)}+k_{n+1}^{(0)}+k_{n}^{(0)}+1.$
      
      Now, we see that $\{k_n^{(0)}\}_{n\geq 1}$ in the required form of the assumptions of Theorem \ref{2order p=1 q=2 Thm}. In this case $m=4$, we compute $N=8$ and we get that
      $$k_n^{(0)}=\dfrac{n^2+c_n}{16}$$ where $\{c_n\}_{n\geq 1}$ is a periodic sequence with period $8$. Finally, one can easily compute the initial values for $\{c_n\}_{n\geq 1}$. Hence, degree sequence of iterates of Somos-4 map growth like $n^2$, so that the algebraic entropy of the Somos-4 map is $0$.
\end{proof}	
\begin{corollary}
		Let $d_n$ be the degree of the $n^{\text{th}}$ iterate of the Somos-$5$ map. Assume that $\{k_n\}_{n\geq1}$ is a sequence of integers which is given by $k_n=\dfrac{n^2+c_n}{28}$ where $\{c_n\}_{n\geq1}$ is a periodic sequence of integers with period $14$. Then $$d_n=k_{n+4}+k_{n+3}+k_{n+2}+k_{n+1}+k_n+1.$$ In particular, the algebraic entropy of the Somos-$5$ map is $0$.
\end{corollary}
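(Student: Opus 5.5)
The plan follows the proof of Corollary~\ref{Somos-4 ex}, with Somos-5 in place of Somos-4.

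First we place the map in the setting of Theorem~\ref{general}. The Somos-5 recurrence $x_{n+5}x_n=x_{n+1}x_{n+4}+x_{n+2}x_{n+3}$ has the form \eqref{eqn:form-of-rec} with $m=5$ and $C=\{(1,4),(2,3)\}$ (up to symmetry). The polynomial $P$ has nonnegative coefficients and is divisible by no single variable. Moreover $c_{1,4}\neq0$, and this is a pair of the form $(t,m-t)$. The Laurent property of Somos-5 is known (\cite{AlmanCuencaHuang,FandZ,hone2008integrality}). Because $c_{1,4}=c_{s,m-1}\neq 0$, Corollary~\ref{cor:p=1-gives-max} gives
\[
d_n=\sum_{i=0}^{4}k^{(i)}_{n+4}+1,
\]
where each $\{k^{(i)}_n\}$ satisfies $k^{(i)}_{n+5}+k^{(i)}_n=\max(k^{(i)}_{n+4}+k^{(i)}_{n+1},\,k^{(i)}_{n+3}+k^{(i)}_{n+2})$ for $n\ge i+1$, with $k^{(i)}_0=\cdots=k^{(i)}_{4+i}=0$ and $k^{(i)}_{5+i}=1$.

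Second, we compare the five sequences. We claim that $k^{(i)}_{n}=k^{(0)}_{n-i}$ for $n\geq i$. The shifted sequence $\ell_n:=k^{(i)}_{n+i}$ has the same zero initial block followed by a single $1$. It satisfies the same recurrence for $n\geq1$, and this is exactly the range in which $k^{(0)}$ obeys the recurrence. Hence $\ell=k^{(0)}$, by uniqueness of the sequence generated from the initial block. The only detail to check is that the recurrence ranges line up: $k^{(i)}$ obeys the recurrence for $n\ge i+1$, which after the shift becomes $n\ge1$. With $k_n:=k^{(0)}_n$ this gives
\[
d_n=k_{n+4}+k_{n+3}+k_{n+2}+k_{n+1}+k_n+1 .
\]

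Third, we use the identity $k_{n+m-1}+k_{n+1}=k_{n+4}+k_{n+1}$ and $k_{n+m-2}+k_{n+2}=k_{n+3}+k_{n+2}$. This shows that for $m=5$ the sequence $\{k_n\}_{n\ge1}$ satisfies \eqref{2orderrec p=1 q=2} with $k_1=\cdots=k_4=0$ and $k_5=1$. Theorem~\ref{2order p=1 q=2 Thm} therefore applies with $N=(m-2)(m-1)+2=3\cdot4+2=14$. It yields $k_n=(n^2+c_n)/28$ with $c_n$ periodic of period $14$. Since $2Nk_n$ and $n^2$ are both integers, each $c_n=28k_n-n^2$ is an integer.

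Finally, the entropy. Since $c_n$ is periodic, it is bounded. So $d_n$ is a sum of five terms of order $n^2/28$, plus a constant, and hence $d_n\sim 5n^2/28$. This grows polynomially, so $\frac1n\log d_n\to0$ and $\mathcal{E}(f)=0$.

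The only step needing care is the shift identification in the second step, specifically that the initial conditions and recurrence ranges match exactly. Everything else is a direct invocation of Theorem~\ref{general}, Corollary~\ref{cor:p=1-gives-max} and Theorem~\ref{2order p=1 q=2 Thm}.
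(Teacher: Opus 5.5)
Your proposal is correct and follows essentially the same route as the paper: use Theorem~\ref{general} together with the monotonicity from Corollary~\ref{cor:p=1-gives-max} (valid since $c_{1,4}\neq 0$), identify $k^{(i)}_n=k^{(0)}_{n-i}$ by the shift, and apply Theorem~\ref{2order p=1 q=2 Thm} with $m=5$, $N=14$. You spell out the steps the paper only calls ``analogous to the Somos-4 case'', namely the monotonicity hypothesis and the exact matching of initial blocks and recurrence ranges under the shift.
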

\begin{proof}
	The proof is analogous to the Somos-4 case, so we only explain the main steps.
	Somos-5 map is defined as 
	$$f \colon \mathbb{A}^5 \ni (w,x,y,z,t)\mapsto (x,y,z,t,(xt+yz)/w) \in \mathbb{A}^5.$$ We consider a sequence of rational functions $\{x_n\}_{n\geq0}$ by the recurrence $$x_{n+5}x_n=x_{n+1}x_{n+4} + x_{n+2}x_{n+3},$$ setting $x_0=w$, $x_1=x$, $x_2=y$, $x_3=z$ and $x_4=t$. Again, by the Laurent property of Somos-k sequences for $k\leq 7$, we directly write that 
	\begin{align*}
	x_n=\dfrac{g_n}{w^{k_n}x^{l_n}y^{m_n}z^{r_n}t^{s_n}}
	\end{align*} where $g_n\in \mathbb{Z}[x_0,x_1,x_2,x_3,x_4]$ and $k_n$, $l_n$, $m_n$, $r_n$ and $s_n$ are nonnegative integers for all $n$ and we may assume that $x_0$, $x_1$, $x_2$, $x_3$ and $x_4$ do not divide $g_n$ if $k_n>0$, $l_n>0$, $m_n>0$, $r_n>0$ and $s_n>0$, respectively. Using the same arguments in Somos-4 we can write that $k_{n+5}=\max (k_{n+4}+k_{n+1},k_{n+2}+k_{n+3})-k_n$ and $$d_n=k_{n+4}+k_{n+3}+k_{n+2}+k_{n+1}+k_n+1.$$
	
	Since $m=5$, we find that $k_n=\frac{1}{28}(n^2+c_n)$ where $\{c_n\}_{n\geq 1}$ is a periodic sequence with period $14$. Hence the algebraic entropy of the Somos-5 map is 0.  
\end{proof}
\begin{corollary}
	Let $f$ be a rational map of the form $f\colon (x_0,\ldots,x_{m-1})\mapsto (x_1,\ldots,x_m)$ where $\{x_n\}_{n\geq 0}$ is a sequence of rational functions which satisfy the following recurrence $$x_{n+m}x_n=x_{n+m-1}x_{n+1}+x_{n+m-2}x_{n+2}$$ for any $m\geq 4$. Then the algebraic entropy of $f$ is 0.
\end{corollary}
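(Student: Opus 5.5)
The plan is to apply Theorem~\ref{general} directly and then reduce all $m$ tropical sequences to the single sequence of Theorem~\ref{2order p=1 q=2 Thm}, exactly as in the proofs for Somos-$4$ and Somos-$5$.

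\emph{Step 1: the hypotheses of Theorem~\ref{general}.} The recurrence $x_{n+m}x_n=x_{n+m-1}x_{n+1}+x_{n+m-2}x_{n+2}$ is of the form \eqref{eqn:form-of-rec}.
\begin{itemize}
\item[(a)] The right-hand side $P=x_1x_{m-1}+x_2x_{m-2}$ is homogeneous quadratic with non-negative coefficients.
\item[(b)] For $m\geq4$, $P$ is not divisible by any initial variable, since the two monomials share no variable.
\item[(c)] We have $c_{1,m-1}\neq0$, which supplies the required pair $(t,m-t)$ with $t=1$.
\item[(d)] The Laurent property holds because this is a two-term Gale--Robinson recurrence, by Fomin--Zelevinsky \cite{FandZ}.
\end{itemize}
Since $c_{1,m-1}\neq0$, Corollary~\ref{cor:p=1-gives-max} also applies. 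It gives
$$\deg(f^n)=1+\sum_{i=0}^{m-1}k^{(i)}_{n+m-1},$$
where each $k^{(i)}$ satisfies \eqref{2orderrec p=1 q=2} with initial data $k^{(i)}_0=\dots=k^{(i)}_{m-1+i}=0$ and $k^{(i)}_{m+i}=1$.

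\emph{Step 2: all $k^{(i)}$ are shifts of one sequence.} The recurrence is autonomous. Moreover, for $i\geq1$ it applies to $k^{(i)}$ only from index $n\geq i+1$ onward, and the terms $k^{(i)}_{i+1},\dots,k^{(i)}_{i+m}$ are $0,\dots,0,1$. Hence the shifted sequence $n\mapsto k^{(i)}_{n+i}$, for $n\geq1$, has the same initial values and the same recurrence as the sequence $\{k_n\}_{n\geq1}$ of Theorem~\ref{2order p=1 q=2 Thm}. It follows that $k^{(i)}_{n}=k_{n-i}$ for $n\geq i+1$. For $i=0$ one checks the same identification directly from $n\geq1$, as in Corollary~\ref{Somos-4 ex}. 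Consequently $\deg(f^n)=1+\sum_{i=0}^{m-1}k_{n+m-1-i}$ for large $n$.

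\emph{Step 3: the growth rate and the entropy.} By Theorem~\ref{2order p=1 q=2 Thm}, $k_n=(n^2+c_n)/(2N)$ with $c_n$ periodic, and so bounded. Therefore $\deg(f^n)=\frac{m}{2N}n^2+O(n)$ grows quadratically, and
$$\mathcal E(f)=\lim_{n\to\infty}\frac1n\log\deg(f^n)=\lim_{n\to\infty}\frac{2\log n+O(1)}{n}=0.$$
Only a polynomial upper bound on $\deg(f^n)$ is actually needed here. Even without the exact shift identification of Step 2, each $k^{(i)}$ is bounded by a shift of $k$, since the initial data are shifted by at most $m$.

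The main obstacle is the bookkeeping in Step 2: checking that the indexing conventions match, in particular the range $n\geq i+1$ on which the recurrence holds. Everything else is a direct invocation of earlier results.
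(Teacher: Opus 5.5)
Your proposal is correct and takes essentially the same route as the paper. The paper gives no separate proof here, but it uses the Somos-$4$/Somos-$5$ template: verify the hypotheses of Theorem~\ref{general} (with $c_{1,m-1}\neq0$), identify each $k^{(i)}$ with the shift $k_{n-i}$ of the sequence in Theorem~\ref{2order p=1 q=2 Thm}, and conclude from $k_n=(n^2+c_n)/(2N)$ that $\deg(f^n)$ grows quadratically, so $\mathcal{E}(f)=0$.
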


\section{Growth Rate Sequences of Gale--Robinson}

\begin{definition}\label{definitiongale}
	We say that a sequence $\{x_n\}_{n\geq0}$ is a three-term Gale--Robinson if it satisfies the following recurrence $$x_{n+m}x_n=Ax_{n+p}x_{n+m-p}+Bx_{n+q}x_{n+m-q}+Cx_{n+r}x_{n+m-r}$$ where $1\leq p<q<r$ are integers and $m=p+q+r$ and $A,B,C$ are positive integer coefficients.   
\end{definition}
\begin{corollary}\label{cor:three-term-gr-p=1}
Let
$$
f\colon\mathbb{A}^m\longrightarrow\mathbb{A}^m
$$
be the rational map
$$
f(x_0,\ldots,x_{m-1})
=
(x_1,\ldots,x_m),
$$
where $\{x_n\}_{n\geq0}$ is a three-term Gale--Robinson sequence with $p=1$. Then the algebraic entropy of $f$ is 
\[
\mathcal{E}(f)
=
\lim_{n\to\infty}
\frac{1}{n}\log\left(\sum_{i=0}^{m-1}k_{n+m-1}^{(i)} \right),
\]
the sequence $\{k_n^{(i)}\}_{n\geq0}$ satisfies
$$
k^{(i)}_{n+m}+k^{(i)}_n
=
\max
\left\{
k^{(i)}_{n+p}+k^{(i)}_{n+m-p},
k^{(i)}_{n+q}+k^{(i)}_{n+m-q},
k^{(i)}_{n+r}+k^{(i)}_{n+m-r}
\right\},
\quad n\geq i+1,
$$
with
$$
k_0^{(i)}=\cdots=k^{(i)}_{m-1+i}=0,
\qquad
k^{(i)}_{m+i}=1.
$$
for $0\leq i \leq m-1$.
    
\end{corollary}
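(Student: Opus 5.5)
The plan is to reduce the statement to Theorem~\ref{general} together with Corollary~\ref{cor:p=1-gives-max}, and then take the limit defining $\mathcal{E}(f)$.

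\emph{Step 1: the Gale--Robinson recurrence fits the setting of Theorem~\ref{general}.} The recurrence of Definition~\ref{definitiongale} has the form \eqref{eqn:form-of-rec} with
\[
P=Ax_{n+p}x_{n+m-p}+Bx_{n+q}x_{n+m-q}+Cx_{n+r}x_{n+m-r}.
\]
This $P$ is homogeneous quadratic with nonnegative coefficients. The indices satisfy $1\le p<q<r\le m-1$, since $m=p+q+r$. For $P$ not to be divisible by any initial variable, note that the three monomials have pairwise disjoint supports: $\{p,m-p\}$, $\{q,m-q\}$, $\{r,m-r\}$ are distinct pairs, because $p,q,r$ are distinct and $m-p\neq q$, etc. (for instance $m-p=q+r>q$). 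So no single variable divides all three monomials. The Laurent property of Gale--Robinson sequences is the Fomin--Zelevinsky result \cite{FandZ}. Finally $c_{p,m-p}=A\neq0$, so the hypothesis with $t=p$ holds. The set $C$ then consists of the pairs $(p,m-p)$, $(q,m-q)$, $(r,m-r)$ and their transposes, and the tropical recurrence of Theorem~\ref{general} becomes exactly the one displayed in the corollary, with the stated initial values.

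\emph{Step 2: use $p=1$ to simplify the maxima.} With $p=1$ we have $c_{1,m-1}\neq0$, so Corollary~\ref{cor:p=1-gives-max} applies. Each $k^{(i)}_n$ is then non-decreasing in $n$, hence
\[
\max\{k^{(i)}_j : n\le j\le n+m-1\}=k^{(i)}_{n+m-1},
\]
and therefore
\[
d_n=\deg(f^n)=1+\sum_{i=0}^{m-1}k^{(i)}_{n+m-1}.
\]

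\emph{Step 3: pass to the limit.} Write $S_n=\sum_i k^{(i)}_{n+m-1}$. We need
\[
\lim_{n\to\infty}\frac1n\log(1+S_n)=\lim_{n\to\infty}\frac1n\log S_n.
\]
By the Remark after Theorem~\ref{general} with $t=1$, since $\gcd(m,1)=1$, every $k^{(i)}_n$ is eventually at least $1$. So $S_n\ge1$ for large $n$, and then $S_n\le 1+S_n\le 2S_n$. The two logarithms differ by at most $\log 2$, which vanishes after dividing by $n$.

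The left-hand limit exists by Fekete's lemma, as recalled in the introduction. Hence the right-hand limit exists and equals $\mathcal{E}(f)$.

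The only step needing care is the non-divisibility check for $P$ in Step 1, together with confirming that the index range of the tropical recurrence matches Theorem~\ref{general}. Both are routine, so I expect no serious obstacle.
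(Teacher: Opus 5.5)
Your proposal is correct and is essentially the paper's argument: the paper states the corollary without a separate proof, as an immediate consequence of Theorem~\ref{general} and Corollary~\ref{cor:p=1-gives-max}. You supply exactly those steps together with the hypothesis checks: disjoint supports of the three monomials, the Laurent property from \cite{FandZ}, and $c_{1,m-1}=A\neq 0$.

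Your handling of the $+1$ via $S_n\le 1+S_n\le 2S_n$ is also fine, though the Remark is not needed there: monotonicity together with $k^{(0)}_m=1$ already gives $S_n\ge 1$ for every $n\ge 1$.
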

\begin{definition}
       Let $\{k_n\}_{n\geq 1}$ be the sequence of integers such that $$k_1=\ldots=k_{m-1}=0 \text{ and } k_m=1$$ and given by the following recurrence
      \begin{equation}\label{3 order k_n}
      k_{n+m}=\max(k_{n+p}+k_{n+m-p},k_{n+q}+k_{n+m-q},k_{n+r}+k_{n+m-r})-k_n
      \end{equation} where $1\leq p < q < r$ and $m=p+q+r$. The sequence $\{k_n\}_{n\geq 1}$ is called the growth rate sequence of the three-term Gale--Robinson. 
\end{definition}
%
%

%


      %
\subsection{Conjecture On Growth Rate Sequence}\label{conjecturesec}
     In this section we focus on the Conjecture~\ref{conj}.
     Note that in previous sections we already showed a similar result for the two-term case when $p=1$, $q=2$ and $m\geq 4$ in Theorem \ref{2order p=1 q=2 Thm}. Let us recall the Conjecture.
    \newline
    \textbf{Conjecture:}
        If $\gcd(p,q,r)=1$, then $$k_n=\dfrac{n^2+c_n}{2N}$$ for all $n\geq1$ where $\{c_n\}_{n\geq 1}$ is a periodic sequence with period $$N=\gcd(p,q)r^2+\gcd(p,r)q^2+\gcd(q,r)p^2+pqr.$$ 
   
 Now, before proving the Conjecture~\ref{conj} for the specific class of growth rate sequences $\{k_n\}_{n\geq 1}$ satisfying \eqref{3 order k_n}, we need some preliminary lemmas. In the first lemma, we will see a combinatorial formula for this specific class of growth rate sequences $\{k_n\}_{n\geq 1}$.
   \begin{lemma}\label{Lemma on f(i,j,m)}
   	Let $\{k_n\}_{n\geq 1}$ be the growth rate sequence of three-term Gale--Robinson with $p=1$, $q=2$ and $r\geq 3$. For $1\leq n \leq N/2$ write $n$ in the form $n=(i-1)(m-1)+j$ for $1\leq i$ and $1\leq j \leq m-1$, and define $$f(i,j,m)\footnotemark=f(n):=\binom{i}{2}+\max\left(\ceil[\bigg]{\dfrac{j+2(i-1)-(m-1)}{2}},0\right).$$
    \footnotetext{Since the triple $(i,j,m)$ determines the n we note the function by $f(n)$ if $(i,j,m)$ clear by the context}
    Then the following equations hold. 
        \begin{enumerate}
        
   		\item[i)] $k_n=f(n)$ if $1\leq n<N/2$, 
   		\item[ii)] $k_{N/2}=f(N/2)$ if $m$ is even, 
   		\item[iii)] $k_{N/2}=f(N/2)-1$ if $m$ is odd.
   	\end{enumerate} 
   		\end{lemma}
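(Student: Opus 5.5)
The plan is to follow the proof of Theorem~\ref{2order p=1 q=2 Thm} closely: strong induction on $n$, assuming $k_\ell=f(\ell)$ for all $\ell<n+m$, and verifying the recurrence at $n+m$. For $p=1$, $q=2$ we have $m=r+3$, so $m-r=3$. The recurrence \eqref{3 order k_n} therefore reads
\[
k_{n+m}=\max\bigl(k_{n+1}+k_{n+m-1},\;k_{n+2}+k_{n+m-2},\;k_{n+3}+k_{n+m-3}\bigr)-k_n .
\]
Also $N=r^2+2r+4+\gcd(2,r)$, i.e.\ $N=(m-1)^2+3+\gcd(2,m-3)$. The base case $1\le n\le m$ is immediate from the initial values: here $i\le 2$, and the ceiling term only becomes positive at $n=m$.

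For the inductive step I write $n=(i-1)(m-1)+j$ and set $e=j+2(i-1)-(m-1)$, so that $f(n)=\binom{i}{2}+\max(\lceil e/2\rceil,0)$. Moving from $n$ to $n+s$ with $s<m-1$ either keeps $i$ and raises $j$ by $s$, or wraps around. The wrap replaces $j$ by $j+s-(m-1)$ and $i$ by $i+1$, which changes $\binom{i}{2}$ by $i$ and $e$ by $s+2-(m-1)$. Moving by $m-1$ raises $i$ by one, $\binom{i}{2}$ by $i$, and $e$ by $2$. Moving by $m$ does the same and additionally shifts $j$ by one, so $e$ rises by $3$ (or wraps when $j=m-1$).

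So, as in the two-term case, I will introduce indicator quantities $\delta$. They record whether $e$, $e+1$, $e+2$, $e+3$ are positive, the parity of $e$ (which controls the ceiling), and whether $j$ lies in the last one, two or three positions, where a shift by $1,2,3$ wraps. In terms of these I express $f(n+s)-f(n)$ for $s\in\{1,2,3,m-3,m-2,m-1,m\}$. Substituting, the three candidates in the maximum become $f(n)+i+(\text{small combinations of }\delta\text{'s})$. The goal is to show that the maximum of the three equals $f(n+m)-f(n)+f(n)$. That is, one candidate always attains $f(n+m)+f(n)$ and none exceeds it.

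This is a finite case analysis over the parity of $e$, the sign region of $e$ (namely $e\le -3$, $-2\le e\le 0$, $e>0$), and whether $j$ is near $m-1$. In each case I expect the pair $(1,m-1)$ or $(2,m-2)$ to give the value. The third pair $(3,m-3)$ matters only near the wrap $j\in\{m-3,m-2,m-1\}$. The induction is valid as long as $n+m<N/2$, which gives (i). I must also check that the parameter range stays where the formula is uniform. Since $N/2\approx (m-1)^2/2$, the index $i$ stays below roughly $(m+1)/2$, and the ceiling term stays below the next wrap.

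Parts (ii) and (iii) concern the single index $n+m=N/2$. Here I compute $i$ and $j$ explicitly from $N/2$ separately for $m$ even (so $r$ odd and $\gcd(2,r)=1$) and $m$ odd (so $\gcd(2,r)=2$). I then evaluate the three candidates directly from (i). I expect that for $m$ odd this is exactly the first index at which no candidate attains the value given by $f$. The ``exceptional'' configuration is excluded for $n+m<N/2$, just as in Theorem~\ref{2order p=1 q=2 Thm} the case $\delta_1-\delta_2=-\delta_3=-1$ was excluded before $n=N-m$. In that configuration all three candidates fall short by $1$, so $k_{N/2}=f(N/2)-1$. For $m$ even the parity of $e$ at $N/2$ lets one candidate still reach $f(N/2)$.

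The main obstacle is this boundary bookkeeping. One must identify precisely which combination of $\delta$'s makes all three candidates drop, show that it cannot occur for $n+m<N/2$, and show that it occurs at $N/2$ exactly when $m$ is odd. I would first tabulate small cases ($r=3,4,5,6$) to pin down the exceptional configuration, and only then write the general $\delta$-inequalities.
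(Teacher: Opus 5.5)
Your strategy is the paper's: induction on $n$, with the increments $f(n+s)-f(n)$ for $s\in\{1,2,3,m-3,m-2,m-1,m\}$ encoded by indicators, followed by a direct evaluation of the three candidates at the single index $N/2$. But the case analysis you defer is the whole content of the proof, and two facts you plan around are wrong.

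\emph{The value of $N$.} Since $r^2+2r+4=(r+1)^2+3$, you get $N=(m-2)^2+3+\gcd(2,m-3)=(m-1)(m-3)+4+\gcd(2,m-3)$. Your $(m-1)^2+3+\gcd(2,m-3)$ is odd for even $m$. The correct decompositions are $N/2=(m-1)\frac{m-4}{2}+\frac{m+4}{2}$ for $m$ even and $N/2=(m-1)\frac{m-3}{2}+3$ for $m$ odd. Hence, for the steps with $n+m<N/2$, the row index of $n$ satisfies $i\le (m-4)/2$, respectively $i\le (m-3)/2$. (Also, at $n=m$ the value $1$ comes from $\binom{2}{2}$; the ceiling term is still $0$ there.)

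\emph{The role of the pair $(3,m-3)$.} It is not true that this pair only matters for $j\in\{m-3,m-2,m-1\}$. Take $e=-2$, i.e.\ $j=m-2i-1$. Then $f(n+m)-f(n)=i+1$, but the pairs $(1,m-1)$ and $(2,m-2)$ give at most $i$. The pair $(3,m-3)$ gives $f(n+3)-f(n)=1$ and $f(n+m-3)-f(n)=i$. The second of these needs $j\ge 3$, because then the shift by $m-3$ lands in row $i+1$ with the same $e$. For instance, for $m=8$, $n=10$ ($i=2$, $j=3$) the three candidates are $3,3,4$ and $k_{18}=4$. For $i\ge 2$ this position lies in the interior of a row, so an inductive step organised around your expectation does not close.

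This is also exactly the exceptional configuration you leave to be found. If $e=-2$ and $j\le 2$, the shift by $m-3$ stays in row $i$, so the third candidate also drops. The maximum is then only $f(n)+i$, one less than $f(n+m)$. Now $j=m-2i-1\in\{1,2\}$ forces either $m$ even with $i=(m-2)/2$, $j=1$, or $m$ odd with $i=(m-3)/2$, $j=2$. By the bounds above, with $n+m\le N/2$ this occurs only for $m$ odd, at $n=N/2-m$, which is (iii). For $m$ even the corresponding $n$ has $n+m>N/2$, and the generic step still applies at $n=N/2-m$, which gives (ii).
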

   \begin{proof}
   	Obviously, we see that $k_n=f(n)$ for $1\leq n \leq m$. If $n$ is such that $1\leq n < N/2-m$, then we set the followings;
   	\begin{align*}
   	\delta_1=\begin{cases}
   	1 & \text{if }  j\geq m-2i\\
   	& \text{and } 2 \nmid m+j,\\
   	0 & \text {otherwise} \end{cases}, \ 
   	\delta_2=\begin{cases}
   	1 & \text{if } j\geq m-2i, \\ 
   	0 & \text{otherwise}\end{cases}, \ 
   	\delta_3= \begin{cases}
   	1 & \text{if } m-2i\leq j\leq m-3\\ 
   	  & \text{and }2\nmid m+j, \\
   	0 & \text{otherwise} 
   	\end{cases},
   	\end{align*}  
   	\begin{align*}
   	 \delta_4=\begin{cases}
   	 i-1 & \text{if }  j=1,\\
   	 i & \text {otherwise} \end{cases}, \ 
   	 \delta_5=\begin{cases}
   	 i-1 & \text{if } j\leq 2, \\ 
   	 i & \text{otherwise}\end{cases}, \
   	\delta_6= \begin{cases}
   	1 & \text{if } j\geq m-2i-1,\\ 
   	0 & \text{otherwise}. 
   	\end{cases}
   	\end{align*}
   	Then we see that the formula $f(n)$ gives the following equivalent formulas:
   	\begin{align*}
   	&f(n+1)=f(n)+\delta_1,\\
   	&f(n+2)=f(n)+\delta_2,\\
   	&f(n+r)=f(n)+\delta_5,\\
   	&f(n+m-1)=f(n)+i+\delta_2,\\
   	&f(n+m-2)=f(n)+\delta_4+\delta_1,\\
   	&f(n+m-r)=f(n)+\delta_3+\delta_6,\\
   	&f(n+m)=f(n)+i+\delta_1+\delta_6.
   	\end{align*} 
   	for the terms of the sequence $\{k_n\}_{n\geq1}$ for all $1\leq n < N/2-m$. Suppose the formula $f(n)$ holds for $k_1,\ldots,k_{n+m-1}$. Then 
   	\begin{align*}
   	k_{n+m}&=\max(k_{n+1}+k_{n+m-1}-k_n,k_{n+2}+k_{n+m-2}-k_n,k_{n+r}+k_{n+m-r}-k_n)\\
   	&=k_n+\max(\delta_1+i+\delta_2,\delta_1+\delta_2+\delta_4,\delta_3+\delta_5+\delta_6)\\
   	&=k_n+\begin{cases}
   	i+2 & \text{if } j\geq m-2i \text{ and } 2\nmid m+j,\\
   	i+1 & \text{if } j=m-2i-1,\\
   	i+1 & \text{if } j\geq m-2i-1 \text{ and } 2\mid m+j,\\
   	i   & otherwise.
   	\end{cases}\\
   	&=k_n+i+\delta_1+\delta_6.
   	\end{align*}
   	Thus, the formula $f(n)$ is also true for $k_{n+m}$ and therefore by induction for all $1\leq n < N/2$. This completes the proof of the first claim. 
   	
    Therefore, direct computations show the second and third claim using the recurrence of the sequence $\{k_n\}_{n\geq1}$ and the formula $f(n)$.
   	\end{proof}
     \begin{lemma}\label{Lemma 2 on p1 q2 r arbit}
     	Let $\{k_n\}_{n\geq 1}$ be the growth rate sequence of three-term Gale--Robinson with $p=1$, $q=2$ and $r\geq 3$. Then $k_{N/2+n}=k_{N/2-n}+n$ where $1\leq n < N/2.$ Moreover, $k_{N+n}=\frac{N+2n}{2}+k_n$ for $1\leq n\leq N$.
     \end{lemma}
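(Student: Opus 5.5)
The plan is to prove the reflection identity $k_{N/2+n}=k_{N/2-n}+n$ by strong induction on $n$ through the recurrence \eqref{3 order k_n}, and then to deduce the second statement from it exactly as in the proof of Theorem~\ref{2order p=1 q=2 Thm}.

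The key observation is that the recurrence is symmetric under time reversal. If we rewrite \eqref{3 order k_n} as
\[
k_{n+m}+k_n=\max\bigl(k_{n+1}+k_{n+m-1},\,k_{n+2}+k_{n+m-2},\,k_{n+r}+k_{n+m-r}\bigr),
\]
the relation is invariant under $n\mapsto -n$ reflection of indices. It is also invariant under adding an arithmetic progression $an+b$ to the sequence, because each side is a sum of two terms whose indices add up to $2n+m$.

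Hence the sequence $l_n:=k_{N/2-n}+n$, read backwards from the centre, satisfies the same three-term tropical relation as $\tilde l_n:=k_{N/2+n}$ in the range where both are defined. It therefore suffices to check that the two sequences agree on a window of $m$ consecutive indices, namely $n=0,1,\ldots,m-1$ (or a shifted window around $N/2$). After that, the recurrence propagates the equality forward, since $k_{N/2+n}$ for $n\ge m$ is determined by the previous $m$ values.

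\textbf{Step 1: the window.} I will verify the identity on the window directly from Lemma~\ref{Lemma on f(i,j,m)}. That lemma gives $k_{N/2-n}=f(N/2-n)$ for $1\le n<N/2$, together with the value of $k_{N/2}$, which depends on the parity of $m$. Using these, I will compute $k_{N/2+1},\ldots,k_{N/2+m-1}$ from the recurrence, plugging in the closed form $f$ near $n\approx N/2$.

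To handle $f$ near the centre, write $N=2r^2+4+r+2\cdot 1$. Recall $N=\gcd(1,2)r^2+\gcd(1,r)\cdot4+\gcd(2,r)+2r$, so $N=r^2+4+\gcd(2,r)+2r$. Then express $N/2$ in the form $(i-1)(m-1)+j$ with $m=r+3$. The cases $m$ even and $m$ odd must be treated separately, matching items ii) and iii) of the lemma. I expect this explicit boundary computation, with the ceiling in $f$ and the parity split, to be the main obstacle. I would first tabulate the differences $f(N/2-n)-f(N/2-n-1)$ for $0\le n\le m$, using the $\delta$-increments from the proof of Lemma~\ref{Lemma on f(i,j,m)}. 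The reflected identity then amounts to checking that the maxima in the recurrence pick the mirror-image term.

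\textbf{Step 2: induction.} Assume $k_{N/2+n'}=k_{N/2-n'}+n'$ for all $n'<n$, with $n\ge m$ (or beyond the window). Apply the recurrence at index $N/2+n-m$ and substitute the reflected values. The right-hand side becomes the recurrence relation for $k$ centred at $N/2-n$: the index pairs $(1,m-1),(2,m-2),(r,m-r)$ are mapped to themselves under reflection. The linear terms $n$ are preserved, because each pair sums consistently. This yields $k_{N/2+n}=k_{N/2-n}+n$, and the argument runs up to $n<N/2$, where the left indices stay $\ge1$.

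\textbf{Step 3: the second statement.} Taking $n\to N/2-n$ in the identity, we obtain $k_{N-n}=k_n+N/2-n$ for $1\le n<N/2$. We then get $k_{N-1}$, $k_N$ and the values near $N$ explicitly, with $k_N=N/2-1$ coming from the boundary relation as in \eqref{k_N}. From this I will compute $k_{N+1},\ldots,k_{N+m}=N/2+n+k_n$ on an initial window. Finally, as in the proof of Theorem~\ref{2order p=1 q=2 Thm}, the shifted sequence $k_{N+n}-N/2-n$ satisfies \eqref{3 order k_n} by affine invariance and has initial values $0,\ldots,0,1$. It therefore coincides with $k_n$, giving $k_{N+n}=\frac{N+2n}{2}+k_n$ for all $n\ge1$, and in particular for $1\le n\le N$.
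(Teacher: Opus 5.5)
Your proposal is correct and follows essentially the same route as the paper. Both arguments take the base cases near $N/2$ from Lemma~\ref{Lemma on f(i,j,m)} with the parity split on $m$, then prove the inductive step by substituting the reflected values into the recurrence, which works because the pairs $(s,m-s)$ are symmetric and the recurrence is invariant under adding an arithmetic progression. The second statement then follows from the window $k_{N+1},\dots,k_{N+m}$ together with uniqueness for the shifted sequence.

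Two small points. Your base window $0\le n\le m-1$ even saves one base case: the paper checks $1\le n\le m$, whereas in your version $n=m$ comes out of the inductive step. Also, your first expression $N=2r^2+4+r+2$ is a slip; the correct value is $N=r^2+2r+4+\gcd(2,r)$, as you state immediately afterwards.
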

    \begin{proof}
    	First, we want to prove that 
    	\begin{equation}\label{eqn k_N/2}
    	k_{N/2+n}=k_{N/2-n}+n, \qquad 1\leq n< N/2.
    	\end{equation}
    	We prove this by induction on $n$. In order to use induction, we need to check that formula \eqref{eqn k_N/2} holds for $1\leq n \leq m$. Assume that $m=p+q+r$ is an even integer, which means $r$ is odd. Then we compute that $$N=p^2+q^2+r^2+pqr=(m-1)(m-3)+5,$$ which implies the following 
    	\begin{equation}\label{eqn N/2}
    	N/2=(m-1)\left(\dfrac{m-4}{2}\right) + \dfrac{m+4}{2}.
    	\end{equation}
    	Using equation \eqref{eqn N/2} we write the following equations;
    	\begin{align*}
    	&N/2-m+1=(m-1)\left(\dfrac{m-6}{2}\right)+\dfrac{m+4}{2},\\
    	&N/2-m+2=(m-1)\left(\dfrac{m-6}{2}\right)+\dfrac{m+6}{2},\\
    	&N/2-m+3=(m-1)\left(\dfrac{m-6}{2}\right)+\dfrac{m+8}{2},\\
    	&N/2-m+r+1=(m-1)\left(\dfrac{m-4}{2}\right)+\dfrac{m}{2},\\
    	&N/2-1=(m-1)\left(\dfrac{m-4}{2}\right)+\dfrac{m+2}{2},\\
    	&N/2+1-r=(m-1)\left(\dfrac{m-6}{2}\right)+\dfrac{m+10}{2}.\\
    	\end{align*}
    	By the recurrence of the $\{k_n\}_{n\geq 1}$ we have that 
    	\begin{align*}
    	k_{N/2+1}&=max\left(k_{N/2+2-m}+k_{N/2},k_{N/2-m+3}+k_{N/2-1},k_{N/2-m+r+1}+k_{N/2+1-r}\right) \\ &-k_{N/2-m+1}.
    	\end{align*}
        Now we use the equations above and the Lemma \ref{Lemma on f(i,j,m)}. To be rigorous, we assume that $m\geq 12$. Notice that if $m<12$, then we need to arrange some of the equations above to have $1\leq j \leq m-1$ for the formula $f(n)$. Moreover, one can observe that for $m\in \{6,8,10\}$ the formula in \eqref{eqn k_N/2} holds via direct calculations.
        Therefore,

\begin{align*}
k_{N/2+1}
=\max\Big[f\Big(\tfrac{m-4}{2},\tfrac{m+6}{2},m\Big)+f\Big(\tfrac{m-2}{2},\tfrac{m+4}{2},m\Big),\\
f\Big(\tfrac{m-4}{2},\tfrac{m+8}{2},m\Big)+f\Big(\tfrac{m-2}{2},\tfrac{m+2}{2},m\Big),\\
f\Big(\tfrac{m-2}{2},\tfrac{m}{2},m\Big)+f\Big(\tfrac{m-4}{2},\tfrac{m+10}{2},m\Big)\Big]\\
-f\Big(\tfrac{m-4}{2},\tfrac{m+4}{2},m\Big)\\
=\binom{(m-2)/2}{2}+\Big\lceil\tfrac{m-4}{4}\Big\rceil+1=k_{N/2-1}+1.
\end{align*}
  
Now, assume that $m$ is odd, so that $r=m-3$ is even. Then $\gcd(q,r)=2$. Thus,
$$N=r^2+q^2+2p^2+pqr=r^2+2r+6=(m-1)(m-3)+6.$$
\begin{equation}\label{eqn:N/2-when-m-is-odd}
    N/2=(m-1)\Big(\frac{m-3}{2}\Big)+3 
\end{equation}
Now we use Lemma~\ref{Lemma on f(i,j,m)}(iii) we have $k_{N/2}=f(N/2)-1$. We set
\[
a=\dfrac{m-3}{2} \text{ and } b=\dfrac{m-1}{2}.
\]
We have $N/2-m+r+1=N/2-2$ and $N/2+1-r=N/2-m+4$ since $r=m-3$. Moreover,
\begin{align*}
N/2-m+1&=(m-1)(a-1)+3,\\  
N/2-m+2&=(m-1)(a-1)+4,\\
N/2-m+3&=(m-1)(a-1)+5,\\
N/2-m+4&=(m-1)(a-1)+6,\\
N/2-2&=(m-1)(b-1)+1,\\
N/2-1&=(m-1)(b-1)+2.
\end{align*}
By Lemma~\ref{Lemma on f(i,j,m)} we obtain
\begin{align*}
    &k_{N/2-m+1}=k_{N/2-m+2}=\binom{a}{2},\\ &k_{N/2-m+3}=k_{N/2-m+4}=\binom{a}{2}+1,\\ &k_{N/2-2}=k_{N/2-1}=k_{N/2}=\binom{b}{2}.
\end{align*}
Hence
\begin{align*}
k_{N/2+1}&=\max\big(k_{N/2-m+2}+k_{N/2},\;k_{N/2-m+3}+k_{N/2-1},\;k_{N/2-2}+k_{N/2-m+4}\big)-k_{N/2-m+1}\\
&=\max\Big(\binom{b}{2},\,\binom{b}{2}+1,\,\binom{b}{2}+1\Big)=\binom{b}{2}+1=k_{N/2-1}+1.
\end{align*}
Note that here all indices satisfy $1\le j\le m-1$ for every odd $m\ge7$, so no small cases need separate treatment.

The remaining base cases $2\leq n\leq m$ follow from analogous computations using Lemma~\ref{Lemma on f(i,j,m)} and the decompositions above. For $m$ even, they are based on \eqref{eqn N/2} and $k_{N/2}=f(N/2)$. For $m$ odd, they are based on \eqref{eqn:N/2-when-m-is-odd} and $k_{N/2}=f(N/2)-1$.

Therefore, we have equation~\eqref{eqn k_N/2} holds for $1\leq n \leq m$. Suppose that equation \eqref{eqn k_N/2} holds for all $t\leq n+m-1$ with $1\leq n<N/2-m$. Now, we will show that equation \eqref{eqn k_N/2} holds for $t=n+m$. We know that
        \begin{align*}
        k_{N/2+n+m}=&\max(k_{N/2+n+1}+k_{N/2+n+m-1},k_{N/2+n+2}+k_{N/2+n+m-2},\\
        &k_{N/2+n+r}+k_{N/2+n+m-r})-k_{N/2+n}.
        \end{align*} We use the induction assumption in the previous equation, and we get 
        \begin{align*}
        k_{N/2+n+m}=&\max(k_{N/2-n-1}+n+1+k_{N/2-n-m+1}+n+m-1,\\
        &k_{N/2-n-2}+n+2+k_{N/2-n-m+2}+n+m-2,\\
        &k_{N/2-n-r}+n+r+k_{N/2-n-m+r}+n+m-r)-(k_{N/2-n}+n)\\
        &=k_{N/2-(n+m)}+n+m.
        \end{align*}
        Moreover, using equation \eqref{eqn k_N/2} and the recurrence of the $\{k_n\}_{n\geq 1}$ we directly compute that $$k_{N+n}=\dfrac{N+2n}{2}+k_n, \qquad 1\leq n \leq m.$$ 
        We see that the sequence $\{l_n\}_{n\geq1}$ is given by $$l_n=k_{N+n}-\dfrac{N+2n}{2}$$ satisfies the recurrence of $\{k_n\}_{n\geq1}$ and with initial values $l_1=\dots=l_{m-1}=0$ and $l_m=1$. Hence, $l_n=k_n$ for all $n\geq 1$.
    \end{proof}
    %
      %
\subsection{Proof of Theorem \ref{thm on 3order growth rate of GR}}
      \begin{proof}
      	We define a sequence $\{c_n\}_{n\geq1}$ such that $$c_n=2Nk_n-n^2$$ for all $n\geq 1$. In order to see that $\{c_n\}_{n\geq1}$ is a periodic sequence with period $N$. We use the Lemma \ref{Lemma 2 on p1 q2 r arbit} and check the following
      	\begin{align*}
      	c_{N+n}=2Nk_{N+n}-\left(n+N\right)^2=2N\left(\dfrac{N+2n}{2}+k_n\right)-\left(N+n\right)^2=2Nk_n-n^2=c_n
      	\end{align*} for all $n\geq 1$. 
      	\end{proof}
\begin{corollary}
The Somos-6 and Somos-7 recurrences are the three-term Gale--Robinson recurrences with $(p,q,r)=(1,2,3)$ and $(1,2,4)$, respectively. Hence, by Corollary~\ref{cor:p=1-gives-max} and Theorem~\ref{thm on 3order growth rate of GR}, the degrees of the iterates of the corresponding maps are given by 
$$d_n = 1+\sum_{i=0}^{m-1} k^{(i)}_{n+m-1} = 1+\sum_{j=n}^{n+m-1} k_j, \qquad n\geq 1,$$
where for $0\leq i \leq m-1$
$$k_n^{(i)}=k_{n-i} \text{ for } n\geq i+1, \qquad k_n=\frac{n^2+c_n}{2N},$$
with $m=6$, $N=20$ and $m=7$, $N=30$, respectively, where $\{c_n\}$ is periodic with period $N$. In particular, both maps have zero algebraic entropy.
\end{corollary}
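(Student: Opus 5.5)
The statement to prove is the final corollary on Somos-6 and Somos-7. The plan is to combine Theorem~\ref{general} (through Corollary~\ref{cor:p=1-gives-max}) with Theorem~\ref{thm on 3order growth rate of GR}.

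\textbf{Step 1: the setting and the degree formula.} For $(p,q,r)=(1,2,3)$ and $(1,2,4)$ we have $m=6$ and $m=7$. The recurrence \eqref{eqn:three-term-GR} has the form \eqref{eqn:form-of-rec} with nonnegative coefficients. The polynomial $P=x_1x_{m-1}+x_2x_{m-2}+x_3x_{m-3}$ is not divisible by any single variable, since it is a sum of distinct monomials with no common variable. The Laurent property for Somos-$k$ with $k\le 7$ is known, as cited in the proof of Corollary~\ref{Somos-4 ex}. We also have $c_{1,m-1}\neq0$, so the hypothesis $c_{t,m-t}\neq 0$ holds with $t=1$, and Corollary~\ref{cor:p=1-gives-max} applies with $s=1$. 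This gives
\[
d_n=1+\sum_{i=0}^{m-1}k^{(i)}_{n+m-1}.
\]

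\textbf{Step 2: the shift identity $k_n^{(i)}=k_{n-i}$ for $n\ge i+1$.} Here $k_n$ denotes the growth rate sequence indexed from $n\geq1$, and the target is $k^{(i)}_n=k_{n-i}$. Put $l_n:=k^{(i)}_{n+i}$ for $n\ge 1$.
\begin{itemize}
\item For $1\le n\le m-1$ we get $l_n=0$, and $l_m=k^{(i)}_{m+i}=1$.
\item The recurrence for $k^{(i)}$ holds for indices $n\ge i+1$. After the shift it becomes the recurrence \eqref{3 order k_n} for $l_n$ with $n\ge1$.
\end{itemize}
So $l$ and $k$ satisfy the same recurrence with the same initial values, hence $l=k$. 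This gives $k^{(i)}_{n+m-1}=k_{n+m-1-i}$. Summing over $0\le i\le m-1$ gives $\sum_{j=n}^{n+m-1}k_j$, which is the second expression for $d_n$.

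One indexing point needs care. The theorem's $k^{(i)}_0,\dots,k^{(i)}_{m-1+i}$ are all zero, while $k$ has the initial block $k_1=\dots=k_{m-1}=0$. The two offsets must be matched exactly. In particular, for $n\ge1$ every index $n+m-1-i$ must be at least $1$; this holds because $n+m-1-i\ge n\ge1$.

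\textbf{Step 3: the closed form and zero entropy.} Theorem~\ref{thm on 3order growth rate of GR} applies with $r=3$ and $r=4$, giving $k_n=(n^2+c_n)/(2N)$ with $N$ as in Conjecture~\ref{conj}. Evaluating $N$:
\begin{itemize}
\item For $(1,2,3)$: $N=9+4+1+6=20$.
\item For $(1,2,4)$: $N=16+4+4+8=32$.
\end{itemize}
The second value does not match the stated $30$. I expect this step to be the main obstacle. I would first recheck it against the alternative form $N=(m-1)(m-3)+6$ used in the proof of Lemma~\ref{Lemma 2 on p1 q2 r arbit} for odd $m$, which gives $6\cdot4+6=30$. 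The gcd formula evaluates differently because of the factor $\gcd(q,r)p^2=2$. So I would recompute $N$ from the lemma's form, which is the one the proof actually uses, and match it with the values in \cite{BykovskiiRomanovUstinov2021}.

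Finally, since $c_n$ is periodic and hence bounded, $d_n$ is a quadratic polynomial in $n$ plus a bounded term, so $\frac1n\log d_n\to0$ and $\mathcal E(f)=0$.
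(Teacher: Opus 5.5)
Steps 1 and 2 follow the route the paper intends, and they are correct. You apply Corollary~\ref{cor:p=1-gives-max} with $s=1$ and then use the shift $l_n=k^{(i)}_{n+i}$; the paper leaves that shift implicit, as it does in Corollary~\ref{Somos-4 ex}, so spelling it out is a small improvement.

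The flaw is in Step 3. The ``main obstacle'' you identify does not exist; it is an arithmetic slip. For $(p,q,r)=(1,2,4)$ the third term of the period formula is $\gcd(q,r)p^2=\gcd(2,4)\cdot 1^2=2$, not $4$. Hence
\[
N=\gcd(1,2)\cdot 4^2+\gcd(1,4)\cdot 2^2+\gcd(2,4)\cdot 1^2+1\cdot 2\cdot 4=16+4+2+8=30,
\]
which is exactly the stated value. You even write that this factor equals $2$, which contradicts your own sum $16+4+4+8$.

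The two forms of $N$ also cannot disagree. The lemma's odd-$m$ expression $(m-1)(m-3)+6=r^2+2r+6$ is just the gcd formula specialized to $p=1$, $q=2$, $r$ even. Likewise, the even-$m$ expression $(m-1)(m-3)+5$ gives $20$ for $m=6$, matching your correct value for $(1,2,3)$.

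So there is nothing to recheck against \cite{BykovskiiRomanovUstinov2021}. As written, your proposal asserts a false mismatch and leaves Step 3 unresolved. Once the arithmetic is corrected, Theorem~\ref{thm on 3order growth rate of GR} gives $N=20$ and $N=30$ directly, and your bounded-correction argument for zero entropy completes the proof.
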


      	\section*{Acknowledgments}
        The results presented in this paper were originally obtained during the author's master's studies. The author would therefore like to thank his master's thesis supervisor, Jakub Byszewski, for his guidance at that time and for his helpful suggestions on the introduction and the overall structure of the present paper. The author (M.E. Can) was supported by the Croatian Science Foundation under the project number HRZZ-UIP-2025-02-4309.
      	
      	\bibliographystyle{amsplain}
      	\bibliography{GrowthRateSequences}	
\end{document}